\numberwithin{equation}{section}
\theoremstyle{plain}
\newtheorem{thm}{Theorem}[section]
\newtheorem{lem}[thm]{Lemma}
\newtheorem{prop}[thm]{Proposition}
\newtheorem{cor}[thm]{Corollary}
\newtheorem*{thm*}{Theorem}
\newtheorem*{lem*}{Lemma}
\newtheorem*{prop*}{Proposition}
\newtheorem*{cor*}{Corollary}
\theoremstyle{definition}
\newtheorem{defn}[thm]{Definition}
\newtheorem*{defn*}{Definition}
\newtheorem{ex}[thm]{Example}
{}
\newtheorem{rem}[thm]{Remark}
\newtheorem*{rem*}{Remark}
\newtheorem*{ack}{Acknowledgements}{}
\theoremstyle{remark}
{}
{}
{}
\def\to{\longrightarrow}
\def\ZZ{\mathbb{Z}}
\def\A{\mathcal{A}}
\def\Ind{\mathrm{Ind}}
\def\sat{\mathrm{sat}}
\def\sfA{\mathsf{A}}
\def\sfD{\mathsf{D}}
\def\sfK{\mathsf{K}}
\def\sfT{\mathsf{T}}
\def\sfP{\mathsf{P}}
\def\sfM{\mathsf{M}}
\def\sfC{\mathsf{C}}
\def\mcS{\mathcal{S}}
\def\mcF{\mathcal{F}}
\def\base{\mathbbm{k}}
\DeclareMathOperator{\End}{End}
\DeclareMathOperator{\Spec}{Spec}
\DeclareMathOperator{\coker}{coker}
\DeclareMathOperator{\im}{im}
\DeclareMathOperator{\Hom}{Hom}
\DeclareMathOperator{\RHom}{\mathbf{R}Hom}
\DeclareMathOperator{\RuHom}{\mathbf{R}\underline{Hom}}
\DeclareMathOperator{\Ext}{Ext}
\DeclareMathOperator{\cone}{cone}
\DeclareMathOperator{\modu}{\mathsf{mod}}
\DeclareMathOperator{\Modu}{\mathsf{Mod}}
\DeclareMathOperator{\Gr}{Gr}
\DeclareMathOperator{\gr}{gr}
\DeclareMathOperator{\Inj}{Inj}
\DeclareMathOperator{\add}{add}
\DeclareMathOperator{\thick}{thick}
\DeclareMathOperator{\Thick}{\mathrm{Thick}}
\DeclareMathOperator{\Add}{Add}
\DeclareMathOperator{\soc}{soc}
\DeclareMathOperator{\length}{length}
\DeclareMathOperator{\Sat}{Sat}
\DeclareMathOperator{\NC}{NC}
\title[The dual numbers, arcs, and non-crossing partitions]{On the graded dual numbers, arcs, and non-crossing partitions of the integers}
\author{Sira Gratz}
\address{Sira Gratz, 
University of Oxford,
Mathematical Institute,
Andrew Wiles Building,
Radcliffe Observatory Quarter,
Woodstock Road,
Oxford,
OX2 6GG}
\email{sira.gratz@maths.ox.ac.uk}
\author{Greg Stevenson}
\address{Greg Stevenson, Universit\"at Bielefeld, Fakult\"at f\"ur Mathematik, BIREP Gruppe, Postfach 10\,01\,31, 33501 Bielefeld, Germany.}
\email{gstevens@math.uni-bielefeld.de}
\begin{document}

\begin{abstract}
\noindent We give a combinatorial model for the bounded derived category of graded modules over the dual numbers in terms of arcs on the integer line with a point at infinity. Using this model we describe the lattice of thick subcategories of the bounded derived category, and of the perfect complexes, in terms of non-crossing partitions. We also make some comments on the symmetries of these lattices, exceptional collections, and the analogous problem for the unbounded derived category.
\end{abstract}

\maketitle

\setcounter{tocdepth}{1}
\tableofcontents

\section{Introduction}

There is, by now, a small industry of classifying well-behaved subcategories of triangulated categories. Here well-behaved usually means, at a minimum, closed under suspension, cones, and summands but frequently also entails being closed under some additional operation, such as a tensor product if the triangulated category is monoidal. This program of understanding the coarse structure of triangulated categories has its genesis in the pioneering work of Devinatz, Hopkins, and Smith \cite{DevinatzHopkinsSmith}, Neeman \cite{NeeChro}, and Benson, Carlson, and Rickard \cite{BCR}, and has led to a great deal of progress in algebra, geometry, and topology.

The aim of this article is to contribute another piece to the puzzle and advertise a direction that is in need of further investigation. In some sense, following the blueprint provided by the classification theorems mentioned above and culminating in the work of Balmer \cite{BaSpec}, we now have a very good understanding of thick tensor ideals in essentially small tensor triangulated categories. One gets a `continuous' classification in terms of nice subsets of an associated topological space; all three of the works mentioned above fit into this paradigm.

On the other hand, there are many triangulated categories, arising for instance from representations of finite dimensional algebras, which do not necessarily naturally carry a reasonable monoidal structure. Our understanding of these cases is still extremely limited and there are few general techniques for describing the associated lattices of thick subcategories. However, certain examples are, at least partially, understood. Ingalls and Thomas \cite{IT} computed the lattice of thick subcategories for the bounded derived categories of simply laced Dynkin quivers (as well as in affine type). Here, the classification is purely combinatorial -- there is no `continuous part' -- and thick subcategories correspond to non-crossing partitions of the relevant Dynkin type.

More generally, work of Igusa, Schiffler, and Thomas \cite{IS} (cf.\ also Krause's paper \cite{KrauseReport}*{Theorem~6.10}) gives insight into the combinatorial aspect of the classification problem more generally. They give a classification of thick subcategories coming from exceptional collections in bounded derived categories of finite acyclic quivers via a more general version of non-crossing partitions using, as did Ingalls and Thomas, the Coxeter system corresponding to the quiver.

A very natural setting in which one observes both `continuous' and `combinatorial' aspects is the study of derived categories of graded modules over graded rings. In \cite{DSgraded} the continuous part of this classification is treated for graded commutative noetherian rings, the classification of thick tensor ideals being given in terms of certain subsets of the spectrum of homogeneous prime ideals. On the other hand, we are almost completely ignorant when it comes to the lattice of all thick subcategories in such examples.

In this paper we treat a small example, considering the lattice of thick subcategories for the bounded derived category of a graded exterior algebra on one generator (or equivalently, a graded polynomial ring on one generator). We give, in Corollary~\ref{cor_summary}, a complete description of the lattice of thick subcategories of $\sfD^\mathrm{b}(\gr \base[x]/(x^2))$ in terms of non-crossing partitions of the infinite poset $\ZZ\sqcup\{-\infty\}$, generalising the results of \cite{IT} and \cite{IS} to an infinite quiver. Our approach is based on exhibiting a geometric model for the bounded derived category which facilitates the computations of hom-sets and cones. We also discuss the action of various naturally occurring autoequivalences on the lattice of thick subcategories.

The paper is structured as follows: after giving some background and fixing notation in Section~\ref{sec_prelim} we give the geometric model for $\sfD^\mathrm{b}(\gr \base[x]/(x^2))$ in Section~\ref{sec_model}. In Section~\ref{sec_thick} we provide two different descriptions of the lattice of thick subcategories and discuss the action of certain autoequivalences on our model and the lattice. Finally, in Section~\ref{sec_loc}, we give some comments on the analogous problem for the unbounded derived category which we hope will serve as an invitation to others to work on this and related problems.



\section{Preliminaries and notation}\label{sec_prelim}

Throughout we will denote by $R$ the $\ZZ$-graded ring $\base[x]/(x^2)$ with $\vert x \vert = 1$ and by $S$ the $\ZZ$-graded ring $\base[y]$ with $\vert y \vert = 1$.

The purpose of this section is to collect some well-known facts we will use throughout and fix notation. Apart, perhaps, from some sign conventions our notation is relatively standard and this section could easily be skipped and then referred back to if warranted.

\subsection{Graded rings}
By a graded ring $A$ we will always mean a $\ZZ$-graded ring
\begin{displaymath}
A = \bigoplus_{i\in\ZZ} A_i.
\end{displaymath}
We will denote the category of all graded $A$-modules by $\Gr A$ and its full subcategory of finitely presented graded modules by $\gr A$. Throughout we will deal with honestly commutative graded rings so the reader may think of left or right modules as they prefer.

Given a graded $A$-module $M$ we write $M_i$ for the homogeneous elements of $M$ of degree $i$. The category of graded modules is equipped with a grading shift autoequivalence $(1)$ which acts on a graded module $M$ via
\begin{displaymath}
M(1)_i = M_{i+1}
\end{displaymath}
i.e.\ if $M$ is generated in degree $0$ then $M(1)$ is generated in degree $-1$. By taking powers and inverses we have an autoequivalence $(i)$ for every $i\in \ZZ$ and we refer to it as the $i$th \emph{twist} or \emph{internal degree shift}. It will sometimes be convenient to also use this notation to keep track of elements: if $m\in M_j$ we will write $m(i)$ for the corresponding element considered as an element of $M(i)$, now sitting in degree $j-i$.

As we will be using some Morita theory it is convenient to have an alternative description of a graded ring $A$ which is more categorical. The \emph{companion category} of $A$, denoted $\sfC_A$, is the category with objects labeled by $\ZZ$, morphisms
\begin{displaymath}
\sfC_A(i,j) = A_{j-i},
\end{displaymath}
and the obvious composition law and units. The category $\sfC_A$ is Morita equivalent to $A$ in the sense that there is an equivalence of categories
\begin{displaymath}
\Modu \sfC_A \cong \Gr A
\end{displaymath}
where $\Modu \sfC_A$ is the category of contravariant additive functors from $\sfC_A$ to abelian groups. The fact that $\sfC_A$ comes from a graded ring, and the grading shift on $\Gr A$ are reflected by the obvious translation automorphisms of $\sfC_A$. Further details can be found in \cite{DSgraded}. Two examples which are pertinent to this paper can be found in Example~\ref{ex_companion}.

Of particular interest to us are the graded rings $R = \base[x]/(x^2)$ and $S=\base[y]$ generated in degree $1$. We will make frequent use of the fact that $S$ is hereditary and $R$ is self-injective and graded representation finite:
\begin{displaymath}
\Gr R = \Add(R(i), \base(j)\; \vert \; i,j\in \ZZ),
\end{displaymath}
i.e.\ every graded $R$-module is a sum of free modules and simples. 

We denote the bounded derived category of finitely generated $R$-modules by $\sfD^\mathrm{b}(\gr R)$ and by $\sfD^\mathrm{perf}(\gr R)$ the perfect complexes, i.e.\ the full subcategory of complexes quasi-isomorphic to a bounded complex of finitely generated projectives. We use similar notation for $S$, where we will also wish to consider $\sfD^\mathrm{b}_\mathrm{tors}(\gr S)$ the full subcategory of complexes of finitely generated graded $S$-modules with finite length total cohomology. We shall use $\Sigma$ to denote the suspension in all of these categories.

\begin{ex}\label{ex_companion}
The companion category of $S$ is the $\base$-linear path category of the doubly infinite type $A$ quiver
\begin{displaymath}
\xymatrix{
\cdots \ar[r] & i-1 \ar[r] & i \ar[r] & i+1 \ar[r] & \cdots
}
\end{displaymath}
which provides the link with representation theory mentioned in the introduction. The companion category of $R$ is the Koszul dual of the above path category, i.e.\ it is given by the same diagram with square zero relations.
\end{ex}

\subsection{A very special case of the BGG correspondence}

In the course of performing the necessary computations to establish our geometric model in the next section it will be convenient to work with the polynomial ring $S$ rather than the exterior algebra $R$; this is partially for psychological reasons and partially because $S$ is hereditary. Koszul duality, in this rather special situation, tells us that we can work with whichever ring is most convenient in order to understand the bounded derived category. The salient points of the BGG correspondence \cite{BGG} are summarised in the following theorem which we will use freely in the sequel.

\begin{thm}\label{thm_bgg}
The full subcategory
\begin{displaymath}
\sfT = \{\Sigma^{i}\base(-i) \; \vert \; i\in \ZZ\}
\end{displaymath}
of $\sfD^\mathrm{b}(\gr R)$ is tilting. It is equivalent to the companion category of $S$ and so induces an equivalence of triangulated categories
\begin{displaymath}
\phi = \RHom(\sfT,-) \colon \sfD^\mathrm{b}(\gr R) \to \sfD^\mathrm{b}(\gr S)
\end{displaymath}
such that
\begin{itemize}
\item $\phi\circ (1) \cong \Sigma(-1) \circ \phi$;
\item $\phi$ sends perfect complexes to complexes with torsion cohomology.
\end{itemize}
In particular, $\phi$ restricts to an equivalence
\begin{displaymath}
\sfD^\mathrm{perf}(\gr R) \to \sfD^\mathrm{b}_\mathrm{tors}(\gr S).
\end{displaymath}
\end{thm}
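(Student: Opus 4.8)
The plan is to verify that $\sfT$ is a tilting subcategory with endomorphism category $\sfC_S$, to invoke derived Morita theory to produce $\phi$, and then to check the two bullet points by reducing everything to the generating set $\sfT$. The one genuinely non-formal ingredient is the Morita step, for which one needs tilting theory for the bounded derived category of a locally finite graded category; we would appeal to the techniques of \cite{DSgraded} (or to \cite{BGG} directly). Everything else is a computation with the small ring $R$.

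First I would compute, from the minimal graded free resolution
\begin{displaymath}
\cdots \to R(-2) \xrightarrow{\;x\;} R(-1) \xrightarrow{\;x\;} R \to \base \to 0,
\end{displaymath}
the graded groups $\Ext^n_{\gr R}(\base,\base(m))$ and $\Ext^n_{\gr R}(\base,R(m))$. Applying $\Hom_{\gr R}(-,\base(m))$ kills all differentials, so $\Ext^n_{\gr R}(\base,\base(m)) = \base$ when $m=-n$ and $0$ otherwise; translated into $\sfD^\mathrm{b}(\gr R)$ this says exactly that between objects of $\sfT$ there are no nonzero morphisms of nonzero internal degree and that $\Hom_{\sfD^\mathrm{b}(\gr R)}(\Sigma^i\base(-i),\Sigma^j\base(-j))$ is one-dimensional for $j\geq i$ and zero for $j<i$. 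This gives at once the self-orthogonality of $\sfT$ and, after inspecting composition, an identification of the full subcategory $\sfT$ with the companion category $\sfC_S$ (where $\sfC_S(i,j)=S_{j-i}$). For generation, $\sfT$ contains every $\Sigma^i\base(-i)$, so $\thick(\sfT)$ contains every twist $\base(j)$; the non-split extension $0\to\base(-1)\to R\to\base\to 0$ realising $\base(-1)\cong\soc R$ then puts every $R(j)$ into $\thick(\sfT)$; and since every finitely generated graded $R$-module is a finite sum of modules $R(i)$ and $\base(j)$ and $\gr R$ generates $\sfD^\mathrm{b}(\gr R)$, we conclude $\thick(\sfT)=\sfD^\mathrm{b}(\gr R)$. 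Derived Morita theory now gives that $\phi=\RHom(\sfT,-)$ is a triangle equivalence from $\sfD^\mathrm{b}(\gr R)$ onto the thick subcategory of $\sfD(\Modu\sfC_S)$ generated by the representables; since $S$ is hereditary this thick subcategory coincides with $\sfD^\mathrm{b}(\gr S)=\sfD^\mathrm{b}(\modu\sfC_S)$ (every finitely presented graded $S$-module has a length-one resolution by finitely generated graded projectives), and under $\phi$ each $\Sigma^i\base(-i)$ goes to the indecomposable projective graded $S$-module at vertex $i$.

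For the first bullet, both $\phi\circ(1)$ and $\Sigma(-1)\circ\phi$ are exact functors and $\sfT$ generates, so it suffices to compare them on $\sfT$ (objects and morphisms). On objects, $(1)$ sends $\Sigma^i\base(-i)$ to $\Sigma^i\base(1-i)=\Sigma\bigl(\Sigma^{i-1}\base(-(i-1))\bigr)$: it shifts the label $i\mapsto i-1$ and applies $\Sigma$. Under $\phi$ this matches the effect of $\Sigma$ followed by the internal twist $(-1)$ on the indecomposable projective graded $S$-modules, and since both functors then restrict to the same automorphism of the tilting subcategory $\sfT$ they are isomorphic, by uniqueness of exact extensions along $\sfT$.

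For the second bullet I would compute $\phi(R)$. The $\Ext$-computation above shows $\Ext^n_{\gr R}(\base,R(m))=0$ unless $m=1$, in which case $\Ext^0_{\gr R}(\base,R(1))=\Hom_{\gr R}(\base,R(1))=\base$ (the socle inclusion); hence $\RHom(\Sigma^i\base(-i),R)=\Sigma^{-i}\RHom(\base,R(i))$ is nonzero only for $i=1$, so $\phi(R)$ is a one-dimensional complex concentrated at a single vertex of $\sfC_S$, i.e.\ a shift of a twist of the simple graded $S$-module; in particular $\phi(R)\in\sfD^\mathrm{b}_\mathrm{tors}(\gr S)$. Since $\sfD^\mathrm{perf}(\gr R)=\thick\{R(j)\mid j\in\ZZ\}$, since $\phi(R(j))$ is again a shift of a twist of the simple module by the first bullet, and since the torsion complexes form a thick subcategory, $\phi$ carries $\sfD^\mathrm{perf}(\gr R)$ into $\sfD^\mathrm{b}_\mathrm{tors}(\gr S)$. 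Conversely, its image is the thick subcategory generated by all twists of the simple graded $S$-module, which is all of $\sfD^\mathrm{b}_\mathrm{tors}(\gr S)$ because every finite length graded $S$-module has a finite composition series with such factors. Therefore $\phi$ restricts to an equivalence $\sfD^\mathrm{perf}(\gr R)\to\sfD^\mathrm{b}_\mathrm{tors}(\gr S)$, as claimed. The main obstacle throughout is the Morita step and the careful matching of finiteness conditions on the two sides; the $\Ext$-computations and the twist/shift bookkeeping are routine.
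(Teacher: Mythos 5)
The paper does not actually prove this theorem: it calls the statement standard and simply refers the reader to \cite{BGG} and \cite{EFS}, so there is no argument in the paper to compare against. Your blind proof is nevertheless essentially correct as a sketch along the expected tilting-theoretic lines. The $\Ext$-computations from the minimal free resolution $\cdots\to R(-1)\xrightarrow{\,x\,}R\to\base\to 0$ are right and do give both the self-orthogonality of $\sfT$ inside $\sfD^\mathrm{b}(\gr R)$ and the identification of $\sfT$, as a $\base$-linear category, with $\sfC_S$; the generation step via the twists $\base(j)\in\thick(\sfT)$ and the socle extension $0\to\base(-1)\to R\to\base\to 0$ is correct; the computation that $\RHom(\Sigma^i\base(-i),R)$ vanishes unless $i=1$, where it is $\Sigma^{-1}\base$, is right, so $\phi(R)$ really is a shift of a simple; and reducing both bullet points to checking on the generators ($\sfT$ for the first, the $R(j)$ for the second) is exactly the right move. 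The only place you appeal to more than you spell out --- the Morita/tilting theorem for a locally finite tilting subcategory, and the ``uniqueness of exact extensions along $\sfT$'' used to compare $\phi\circ(1)$ with $\Sigma(-1)\circ\phi$ --- genuinely requires a dg (or similar) enhancement to be literally valid, but you flag this yourself and it is precisely what the paper's references supply, so at the level of rigour the paper itself adopts for this statement your argument is sound.
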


As this fact is standard we don't include a proof, there are several modern references such as \cite{EFS}.

\subsection{Thick subcategories}

Let $\sfT$ be an essentially small triangulated category, for instance the bounded derived category of finitely generated graded modules over some graded ring. As earlier we shall denote the suspension in a triangulated category, e.g.\ $\sfT$, by $\Sigma$. We recall that a full replete subcategory $\sfM\subseteq \sfT$ is \emph{thick} if $\sfM$ is closed under the suspension, cones, and taking direct summands. In particular, $\sfM$ is a triangulated subcategory of $\sfT$. Given a set of objects $M\subseteq \sfT$ we denote by $\thick(M)$ the smallest thick subcategory containing $M$ and refer to it as the thick subcategory generated by $M$.

Since $\sfT$ is essentially small there is a set of thick subcategories and we denote it by $\Thick(\sfT)$. This collection is naturally a poset under inclusion and is in fact a complete lattice. The meets are provided by intersections and the join by
\begin{displaymath}
\sfM \vee \sfM' = \thick(\sfM \cup \sfM').
\end{displaymath}



\section{A geometric model for morphisms}\label{sec_model}


\subsection{A combinatorial model}

Consider the poset $\ZZ\sqcup \{-\infty\}$ i.e.\ the integer line together with a point at $-\infty$ which, as the notation suggests, is minimal. We call an ordered pair $(a,b)$ an {\em arc} if $a, b \in \ZZ \cup \{-\infty\}$ with $a <b$ and we call $a$ and $b$ the {\em endpoints of the arc $(a,b)$}. We portray the arc $(a,b)$ by a curve connecting its two endpoints $a$ and $b$:

\begin{center}
\begin{tikzpicture}[scale =.65]
\tikzstyle{every node}=[font=\small]
\draw (-7.5,0) -- (8.5,0);
\draw[dotted] (-8.5,0) -- (-5.5,0);
\draw[dotted] (8.5,0) -- (9.5,0);
\draw (-3,0.1) -- (-3,-0.1) node[below]{$a$};
\draw (5,0.1) -- (5,-0.1) node[below]{$b$};
\node (-infty) at (-8,2.5) {$\bullet$};
\node (-infty) at (-9,2.5) {$-\infty$};

\path (-3,0) edge [out= 60, in= 120] node[below]{$(a,b)$} (5,0);
\draw (-8,2.5) -- (3,2.5) to[out=0,in = 90] (5,0);

\end{tikzpicture}
\end{center}

Now let us fix an arc $(a,b)$. We call
\[
	l(a,b) = 	\begin{cases}
				(b - a) &\text{ if } a,b \in \ZZ \\
				\infty &\text{ if } a = -\infty
			\end{cases}
\]
the {\em length} of the arc $(a,b)$. We denote by 
\[
	\A = \{(a,b) \mid a,b \in \ZZ \cup \{-\infty\} \text{ with } a < b\}
\]
the set of arcs and by
\[
	\A_f = \{(a,b) \mid a,b \in \ZZ \text{ with } a < b\} = \{(a,b) \in \A \mid l(a,b)<\infty\}
\]
the set of arcs of finite length.

Recall that $S$ denotes the graded polynomial ring $S=k[y]$ with $y$ in degree $1$. There is a bijection between the set of arcs $\A$ and the set $\Ind(\gr S)$ of isomorphism classes of indecomposable graded $S$-modules given by
	\begin{eqnarray*}
		\varphi \colon \Ind(\gr S) & \to &  \A\\
		S\big/(y^i)(j) & \mapsto & (j-i,j)\\
		S(j) & \mapsto & (-\infty,j).
	\end{eqnarray*}
This restricts to a bijection between arcs of finite length and isomorphism classes of graded $S$-modules of finite length. In fact, as one easily checks, this bijection is compatible with the notion of length on either side
\begin{displaymath}
\length S/(y^i)(j) = l(j-i,i).
\end{displaymath}
	
	For any arc $(a,b) \in \A$ we denote by $M_{(a,b)}$ the graded $S$-module 
\begin{displaymath}
	\varphi^{-1}((a,b)) = 
	\begin{cases}
	S(b)  &\text{if} \; a = -\infty \\
	S/(y^{b-a})(b)  &\text{if} \; a\in \ZZ
	\end{cases}
\end{displaymath}
	We will frequently also consider $M_{(a,b)}$ as an object of $\sfD^\mathrm{b}(\gr S)$ by viewing it as a stalk complex concentrated in cohomological degree $0$. For the sake of convenience, we allow the notation $M_{(a,b)}$ with $a = b$ and identify
	$M_{(a,a)}$ with the zero object for any $a \in \ZZ \cup \{-\infty\}$.

This induces a natural bijection between sets of arcs, idempotent complete additive subcategories of $\gr S$, and idempotent complete and suspension-closed additive subcategories of $\sfD^\mathrm{b}(\gr S)$. Indeed, since $S$ is hereditary and $\gr S$ is a Krull-Schmidt category, either type of subcategory is just determined by the indecomposable modules it contains and these, as we have seen above, can be identified with arcs.

\subsection{Morphisms and extensions}

We now extend our bijection identifying arcs with indecomposable finitely generated graded $S$-modules to a description of morphisms and extensions in terms of the combinatorics of the arcs. This description will be particularly convenient in the sense that one can read off the corresponding cones in $\sfD^\mathrm{b}(\gr S)$.

\begin{defn}
	We say that two arcs $(a,b), (c,d) \in \A$ {\em touch} if $a \leq c \leq b \leq d$ or $c \leq a \leq d \leq b$. We say that they {\em cross} if $a < c < b < d$ or 
	$c < a < d < b$.
\end{defn}

There are five manners in which two arcs $(a,b)$ and $(c,d)$ can touch with $a \leq c \leq b \leq d$, depending on which of the inequalities are strict and which are equalities. They are illustrated, for finite length arcs, in Figure \ref{F:touching arcs}.

\begin{figure}
\centering
\begin{tikzpicture}[scale =.65]
\tikzstyle{every node}=[font=\small]
\node (x) at (-0.5,0.5) {(i)};
\draw (0,0) -- (5,0);
\draw (1,0.1) -- (1,-0.1) node[below]{$\mathstrut a=c$};
\draw (4,0.1) -- (4,-0.1) node[below]{$\mathstrut b=d$};

\path (1,0) edge [out= 60, in= 120] (4,0);

\begin{scope}[xshift=200]
\node (x) at (-0.5,0.5) {(ii)};
\draw (0,0) -- (5,0);
\draw (0.5,0.1) -- (0.5,-0.1) node[below]{$\mathstrut a$};
\draw (1.5,0.1) -- (1.5,-0.1) node[below]{$\mathstrut c$};
\draw (3.5,0.1) -- (3.5,-0.1) node[below]{$\mathstrut b$};
\draw (4.5,0.1) -- (4.5,-0.1) node[below]{$\mathstrut d$};

\path (0.5,0) edge [out= 60, in= 120] (3.5,0);
\path (1.5,0) edge [out= 60, in= 120] (4.5,0);
\path[dashed] (0.5,0) edge [out= 60, in= 120] (1.5,0);
\path[dotted] (3.5,0) edge [out= 60, in= 120] (4.5,0);
\path[gray] (0.5,0) edge [out= 60, in= 120] (4.5,0);
\path[gray] (1.5,0) edge [out= 60, in= 120] (3.5,0);
\end{scope}

\begin{scope}[xshift=400]
\node (x) at (-0.5,0.5) {(iii)};
 \draw (0,0) -- (5,0);
\draw (0.5,0.1) -- (0.5,-0.1) node[below]{$\mathstrut a$};
\draw (2.5,0.1) -- (2.5,-0.1) node[below]{$\mathstrut c=b$};
\draw (4.5,0.1) -- (4.5,-0.1) node[below]{$\mathstrut d$};

\path (0.5,0) edge [out= 60, in= 120] (2.5,0);
\path (2.5,0) edge [out= 60, in= 120] (4.5,0);
\path[gray] (0.5,0) edge [out= 60, in= 120] (4.5,0);
\end{scope}

\begin{scope}[xshift=100, yshift=-100]
\node (x) at (-0.5,0.5) {(iv)};
 \draw (0,0) -- (5,0);
\draw (0.5,0.1) -- (0.5,-0.1) node[below]{$\mathstrut a=c$};
\draw (3.5,0.1) -- (3.5,-0.1) node[below]{$\mathstrut b$};
\draw (4.5,0.1) -- (4.5,-0.1) node[below]{$\mathstrut d$};

\path (0.5,0) edge [out= 60, in= 120] (3.5,0);
\path (0.5,0) edge [out= 60, in= 120] (4.5,0);
\path[dotted] (3.5,0) edge [out= 60, in= 120] (4.5,0);
\end{scope}

\begin{scope}[xshift=300, yshift=-100]
\node (x) at (-0.5,0.5) {(v)};
\draw (0,0) -- (5,0);
\draw (0.5,0.1) -- (0.5,-0.1) node[below]{$\mathstrut a$};
\draw (1.5,0.1) -- (1.5,-0.1) node[below]{$\mathstrut c$};
\draw (4.5,0.1) -- (4.5,-0.1) node[below]{$\mathstrut b=d$};

\path (0.5,0) edge [out= 60, in= 120] (4.5,0);
\path (1.5,0) edge [out= 60, in= 120] (4.5,0);
\path[dashed] (0.5,0) edge [out= 60, in= 120] (1.5,0);
\end{scope}

\end{tikzpicture}
\caption{The arcs $(a,b)$ and $(c,d)$ touch \label{F:touching arcs}}
\end{figure}
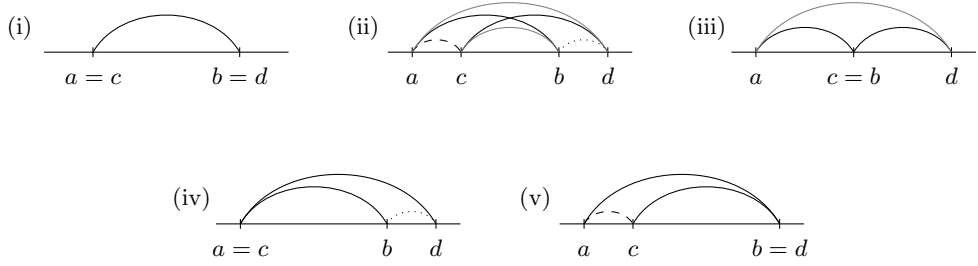

\begin{lem}\label{L:morphisms}
	For any two arcs $(a,b)$ and $(c,d)$ in $\A$ we have
		\[
			\Hom_{\gr S}(M_{(a,b)},M_{(c,d)}) \cong 	\begin{cases}
													\base \text{ if $(a,b)$ and $(c,d)$ touch with }  a \leq c < b \leq d \\
													0 \text{ otherwise.}
												\end{cases}
		\]
	Moreover, if $\varphi \colon M_{(a,b)} \to M_{(c,d)}$ is a non-trivial morphism then we have
		\begin{eqnarray*}
			\ker(\varphi) \cong M_{(a,c)} \\
			\coker(\varphi) \cong M_{(b,d)}.
		\end{eqnarray*}

\end{lem}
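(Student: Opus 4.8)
The plan is to reduce the statement to an elementary computation with cyclic graded $S$-modules. The crucial point is that every $M_{(a,b)}$ is cyclic: it is the free module $S(b)$ when $a=-\infty$, and the cyclic torsion module $S/(y^{b-a})(b)$ when $a\in\ZZ$; in both cases its canonical generator sits in internal degree $-b$, and for $a\in\ZZ$ this generator is annihilated precisely by the ideal $(y^{b-a})$. Hence a graded homomorphism $M_{(a,b)}\to M_{(c,d)}$ is determined by the image of the generator, which can be any homogeneous element of $M_{(c,d)}$ of degree $-b$ killed by $y^{b-a}$ (the annihilation condition being vacuous when $a=-\infty$). So the entire computation comes down to the degree-$(-b)$ graded piece of $M_{(c,d)}$ and the action of a power of $y$ on it.

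First I would record the homogeneous components of $M_{(c,d)}$: each is at most one-dimensional and spanned by a power of $y$, with $S/(y^{d-c})(d)$ concentrated in degrees $-d,-d+1,\dots,-c-1$ and $S(d)$ concentrated in degrees $\ge -d$. This gives $(M_{(c,d)})_{-b}\ne 0$ exactly when $c<b\le d$ (or $b\le d$ if $c=-\infty$), in which case it is spanned by the class of $y^{d-b}$. Next I would impose the annihilation condition, splitting on whether $c\in\ZZ$ or $c=-\infty$: if $c=-\infty$, then $S(d)$ is torsion-free, so the only homomorphism is zero unless $a=-\infty$ as well; if $c\in\ZZ$, then $y^{b-a}\cdot y^{d-b}=y^{d-a}$ vanishes in $S/(y^{d-c})$ exactly when $a\le c$. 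Assembling the cases yields $\Hom_{\gr S}(M_{(a,b)},M_{(c,d)})\cong\base$ precisely when $a\le c<b\le d$ (reading $-\infty\le-\infty$ and $-\infty<b$), and $0$ otherwise; since $a\le c<b\le d$ entails $a\le c\le b\le d$, this is exactly the asserted touching condition. As a sanity check, and an alternative route, one can instead apply $\Hom_{\gr S}(-,M_{(c,d)})$ to the short exact sequence $0\to S(a)\to S(b)\to M_{(a,b)}\to 0$ (taking $S(a)=0$ when $a=-\infty$) and identify the $\Hom$ with the kernel of multiplication by $y^{b-a}$ on $(M_{(c,d)})_{-b}$, arriving at the same answer.

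For the ``moreover'' part, I would use that when the $\Hom$-space is nonzero it is one-dimensional, so up to a unit scalar the non-trivial morphism $\varphi$ is the canonical one sending the generator of $M_{(a,b)}$ to the class of $y^{d-b}$ in $M_{(c,d)}$; on standard monomial bases it is simply $y^{i}\mapsto y^{d-b+i}$. The kernel and cokernel can then be read off directly: the kernel consists of those $y^{i}$ with $y^{d-b+i}=0$ in $M_{(c,d)}$, i.e.\ $i\ge b-c$, which is the submodule $y^{b-c}M_{(a,b)}$; checking that its generator sits in degree $-c$ and computing its annihilator identifies it with $M_{(a,c)}$. Likewise $\im\varphi$ is the submodule of $M_{(c,d)}$ generated by $y^{d-b}$, so the cokernel has the classes of $y^{0},\dots,y^{d-b-1}$ as a basis, with generator in degree $-d$, hence equals $M_{(b,d)}$. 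I do not anticipate a genuine obstacle: the only things demanding care are the bookkeeping with the internal twists and the degenerate cases $a=-\infty$, $a=c$, and $b=d$, where $M_{(a,c)}$ or $M_{(b,d)}$ collapses to the zero object in accordance with the convention $M_{(e,e)}=0$ — one checks these match the formulas above. In short, the whole proof is the translation between the module-theoretic data and the combinatorics of the arc endpoints.
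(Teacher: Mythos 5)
Your proposal is correct and follows essentially the same route as the paper's proof: both arguments exploit that the $M_{(a,b)}$ are cyclic, determine a graded morphism by where the degree-$(-b)$ generator goes, read off the existence condition from the nonvanishing of the relevant graded piece of $M_{(c,d)}$ together with the annihilation constraint, and then compute kernel and cokernel directly from the monomial description. The paper merely organizes the same computation into three explicit cases ($a\in\ZZ$; $a=-\infty$, $c\in\ZZ$; $a=c=-\infty$), whereas you separate the degree and annihilation conditions and branch on $c$, but the content is identical.
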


Before giving the proof let us illustrate the statement using Figure \ref{F:touching arcs}.

\begin{ex}
There are non-trivial morphisms from $M_{(a,b)}$ to $M_{(c,d)}$ for the arcs $(a,b)$ and $(c,d)$ in pictures (i), (ii), (iv) and (v) of Figure \ref{F:touching arcs}. The kernels and cokernels are marked in the same figure: picking a non-trivial morphisms in each of these cases, the arc corresponding to its kernel is drawn with a dashed line, and the arc corresponding to its cokernel with a dotted line. Note that  in (i) the morphism is necessarily an isomorphism, while in (iv) it is a monomorphism and in (v) an epimorphism.
\end{ex}

\begin{proof}
	Assume first that $a \in \ZZ$ and set 
\begin{displaymath}
	j = b, \;\; i =b-a, \;\; l=d, \;\; \text{and} \;\; k = d-c. 
\end{displaymath}
	Then $M_{(a,b)}$ is the module $S/(y^i)(j)$ and $M_{(c,d)}$ is $S/(y^k)(l)$. If $j > l$ or $j \leq l-k$ then $S/(y^k)(l)$ has no non-trivial element in degree $-j$, where the generator
	of $S / (y^i)(j)$ lives, thus there are no non-zero morphisms from $M_{(a,b)}$ to $M_{(c,d)}$. On the other hand, if 
	\begin{displaymath}
	j \leq l \;\; \text{and} \;\; l-k<j
	\end{displaymath}
	consider multiplication by $\lambda y^{(l-j)}$ for a $\lambda \in \base$, which
	sends the generator $1(j) \in S / (y^i)(j)$ to $\lambda y^{(l-j)}(l) \in S / (y^k)(l)$. This map is well-defined if and only if $y^i(j)$, which is trivial in $S / (y^i)(j)$, gets mapped to zero,
	which, for $\lambda \neq 0$, is the case if and only if $l-j+i \geq k$. The modules in question are cyclic so these are the only possible degree zero maps from $S / (y^i)(j)$ to $S / (y^k)(l)$. Thus we conclude that
	$\Hom_{\gr S}(M_{(a,b)},M_{(c,d)}) \cong \base$ if and only if 
	\begin{displaymath}
	j-i \leq l-k < j \leq l, \;\; \text{i.e.\ if and only if} \;\; a \leq c < b \leq d
	\end{displaymath}
	and it is zero otherwise. In the former case, the kernel of multiplication by $\lambda y^{(l-j)}$, for $\lambda \neq 0$, is $S / (y^{(l-k)-(j-i)})(l-k)$ and its cokernel is $S / (y^{l-j})(l)$, that is $M_{(a,c)}$ and $M_{(b,d)}$ respectively.

	Consider now the case where $a = -\infty$ and $c>-\infty$ and set
	\[
	 j=b, \;\; l = d, \; \; \text{and} \; \;k = d-c.
	\]
	Then $M_{(a,b)}$ is the module $S (j)$ with generator living in degree $-j$ and $M_{(c,d)}$ is $S/(y^k)(l)$. There are no non-zero morphisms from $S (j)$ to $S/(y^k)(l)$ if the latter has no non-trivial element in degree $-j$, which is the case if and only if $j>l$ or $j \leq l-k$.  Multiplication by $\lambda y^{(l-j)}$ for any $\lambda \in \base$ gives rise to a well-defined morphism from $S (j)$ to any $S/(y^k)(l)$ with a non-trivial element in degree $-j$. These are the only possible degree zero maps from $S(j)$ to $S/(y^k)(l)$ by virtue of the modules being cyclic, so we see
	$\Hom_{\gr S}(M_{(a,b)},M_{(c,d)}) \cong \base$ if and only if $l-k < j \leq l$ which is the case if and only if $a \leq c < b \leq d$ and it is zero otherwise. In the former case, the kernel of multiplication by $\lambda y^{(l-j)}$, for $\lambda \neq 0$, is $S(l-k)$ and its cokernel is $S/(y^{(l-j)})(l)$, that is $M_{(a,c)}$ and $M_{(b,d)}$ respectively.
	
	Finally consider the case where $a = c = -\infty$ and set
	\[
	 j=b, \;\; \text{and} \; \;l=d.
	\]
	It follows by the same argument given above that $\Hom_{\gr S}(M_{(a,b)},M_{(c,d)}) \cong \base$ if and only if $j \leq l$ which is the case if and only if  $a \leq c < b \leq d$ and is zero otherwise. In the former case, multiplication by $\lambda y^{(l-j)}$, for $\lambda\neq 0$ is a monomorphism with cokernel $S/(y^{(l-j)})(l)$, which corresponds to $M_{(b,d)}$.
\end{proof}

\begin{lem}\label{L:Exts}
	For any two arcs $(a,b)$ and $(c,d)$ in $\A$ we have
		\[
			\Hom_{\sfD^\mathrm{b}(\gr S)}(M_{(c,d)}, \Sigma M_{(a,b)})  \cong	\begin{cases}
													\base \text{ if $(a,b)$ and $(c,d)$ touch with } a < c \leq b < d \\
													0 \text{ otherwise.}
												\end{cases}
		\]
	Moreover, if $\Hom_{\sfD^\mathrm{b}(\gr S)}(M_{(c,d)}, \Sigma M_{(a,b)}) \cong \base$ we have, for each non-zero element, a non-split distinguished triangle
		\begin{eqnarray*}
			M_{(a,b)} \to B \to M_{(c,d)} \to \Sigma M_{(a,b)}
		\end{eqnarray*}
	in $\sfD^\mathrm{b}(\gr S)$ with $B \cong M_{(c,b)} \oplus M_{(a,d)}$, i.e.\ $B$ is the fibre of any non-zero map $M_{(c,d)}\to \Sigma M_{(a,b)}$.
\end{lem}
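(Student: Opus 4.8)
The plan is to reduce the computation to $\gr S$, exploiting that this category is hereditary, and then read everything off from Lemma~\ref{L:morphisms}. Since $\gr S$ is hereditary, $\Hom_{\sfD^\mathrm{b}(\gr S)}(M_{(c,d)}, \Sigma M_{(a,b)}) \cong \Ext^1_{\gr S}(M_{(c,d)}, M_{(a,b)})$. If $c = -\infty$ then $M_{(c,d)} = S(d)$ is free, so this group vanishes, consistent with the stated criterion (which requires $a < c$, impossible for $c = -\infty$); so I would assume $c \in \ZZ$. There $0 \to M_{(-\infty,c)} \xrightarrow{\;\cdot\, y^{d-c}\;} M_{(-\infty,d)} \to M_{(c,d)} \to 0$ is a projective resolution, as $M_{(-\infty,c)} = S(c)$ and $M_{(-\infty,d)} = S(d)$ are free. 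Applying $\Hom_{\gr S}(-,M_{(a,b)})$ then identifies $\Ext^1_{\gr S}(M_{(c,d)}, M_{(a,b)})$ with the cokernel of the map
\[
\psi\colon \Hom_{\gr S}(M_{(-\infty,d)}, M_{(a,b)}) \to \Hom_{\gr S}(M_{(-\infty,c)}, M_{(a,b)})
\]
given by precomposition with the inclusion $M_{(-\infty,c)} \hookrightarrow M_{(-\infty,d)}$.

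By Lemma~\ref{L:morphisms} the source of $\psi$ is $\base$ exactly when $a < d \leq b$ and is $0$ otherwise, and its target is $\base$ exactly when $a < c \leq b$ and is $0$ otherwise. Each graded component of the cyclic module $M_{(a,b)}$ has dimension at most one, so under the identification $\Hom_{\gr S}(S(n), M_{(a,b)}) \cong (M_{(a,b)})_{-n}$ the map $\psi$ becomes multiplication by $y^{d-c}$, which is an isomorphism of one-dimensional spaces whenever its source and target are both non-zero. Hence $\coker\psi \cong \base$ precisely when the target of $\psi$ is non-zero but its source is zero, that is, when $a < c \leq b$ holds while $a < d \leq b$ fails; since $c < d$ this is exactly the chain $a < c \leq b < d$, in which case the arcs touch as in Figure~\ref{F:touching arcs}. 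This would settle the first assertion.

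For the triangle I would first note that the long exact cohomology sequence of $M_{(a,b)} \to B \to M_{(c,d)} \xrightarrow{\eta} \Sigma M_{(a,b)}$ forces $H^i(B) = 0$ for $i \neq 0$, so $B$ is the stalk complex on a module fitting in a short exact sequence $0 \to M_{(a,b)} \to B \to M_{(c,d)} \to 0$ in $\gr S$; and since $\Ext^1_{\gr S}(M_{(c,d)},M_{(a,b)})$ is one-dimensional, all non-split such sequences are scalar multiples of one another and so have isomorphic middle terms. It thus suffices to exhibit one non-split extension with middle term $M_{(c,b)} \oplus M_{(a,d)}$. Assuming $a < c \leq b < d$, I would take from Lemma~\ref{L:morphisms} the monomorphism $f\colon M_{(a,b)} \hookrightarrow M_{(a,d)}$ (case~(iv), kernel $0$), the epimorphism $g\colon M_{(a,b)} \twoheadrightarrow M_{(c,b)}$ (case~(v), kernel $M_{(a,c)}$; with $g = 0$, $M_{(c,b)} = 0$ when $c = b$), the epimorphism $p\colon M_{(a,d)} \twoheadrightarrow M_{(c,d)}$, and a morphism $q\colon M_{(c,b)} \to M_{(c,d)}$; after rescaling $q$ so that $pf = qg$ in the one-dimensional space $\Hom_{\gr S}(M_{(a,b)},M_{(c,d)})$ (both sides being non-zero when $c < b$), the sequence
\[
0 \to M_{(a,b)} \xrightarrow{\;(f,\,g)\;} M_{(a,d)} \oplus M_{(c,b)} \xrightarrow{\;(p,\,-q)\;} M_{(c,d)} \to 0
\]
is a complex. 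Then $(f,g)$ is a monomorphism because $f$ is one; because $g$ is an epimorphism, $\coker(f,g) \cong M_{(a,d)}/f(\ker g) = M_{(a,d)}/M_{(a,c)} \cong M_{(c,d)}$ (using that $M_{(a,d)}$ is uniserial); and the induced surjection $\coker(f,g) \twoheadrightarrow M_{(c,d)}$, being a surjection between objects isomorphic to the finite-length module $M_{(c,d)}$, is an isomorphism, so the sequence is exact. Non-splitness follows from Krull--Schmidt: $M_{(a,d)} \oplus M_{(c,b)} \cong M_{(a,b)} \oplus M_{(c,d)}$ would force the indecomposable $M_{(a,d)}$ to equal $M_{(a,b)}$ or $M_{(c,d)}$, impossible since $a < c$ and $b < d$. (When $c = b$ the sequence degenerates to the defining short exact sequence of $M_{(b,d)} = M_{(c,d)}$.)

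I expect no conceptual difficulty; the work is entirely bookkeeping. The single point requiring care is handling the point at $-\infty$ uniformly — tracking which of the modules involved become free, hence of infinite length, both when identifying the $\Hom$-spaces above and when verifying the displayed exact sequence. Organising the discussion along the same finite/infinite case split used in the proof of Lemma~\ref{L:morphisms} should keep this entirely routine.
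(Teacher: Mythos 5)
Your proof follows essentially the same route as the paper: after noting the hereditary reduction, you apply $\Hom_{\gr S}(-,M_{(a,b)})$ to the projective resolution $0\to M_{(-\infty,c)}\to M_{(-\infty,d)}\to M_{(c,d)}\to 0$ and read the dimension of the cokernel off Lemma~\ref{L:morphisms}, and for the triangle you exhibit the same non-split short exact sequence with middle term $M_{(c,b)}\oplus M_{(a,d)}$ (your maps $f,g,p,q$ are, up to reordering summands and signs, exactly the paper's $1$, $y^{d-b}$, and $\pi$). You supply more detail than the paper's ``one can easily verify'' for the exactness and non-splitness of that sequence, and you phrase the dimension count as ``$\coker\psi\cong\base$ iff the target is nonzero and the source vanishes'' rather than the paper's $\ker g/\im f$ bookkeeping, but these are superficial differences; the argument is the same.
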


Again let us illustrate the lemma using the figure before proving it.

\begin{ex}
There is a non-trivial $\Ext^1$ from $M_{(c,d)}$ to $M_{(a,b)}$ for the arcs $(a,b)$ and $(c,d)$ in pictures (ii) and (iii) of Figure \ref{F:touching arcs}. The arcs corresponding to the indecomposable summands of the middle term of the corresponding non-split distinguished triangle are marked in grey.
\end{ex}

\begin{proof}
	If $c = -\infty$, then $M_{(c,d)}$ is the projective module $S(d)$ and so of course we have $\Ext^1(M_{(c,d)}, M_{(a,b)}) = 0$. Assume then that $c \in \ZZ$ and set 
\begin{displaymath}
	j = d \;\; \text{and} \;\; i =d-c.
\end{displaymath}
Then $M_{(c,d)}$ is $S / (y^i)(j)$ which has minimal projective resolution
	\[	
		0 \to S(j-i) \to S(j) \to 0
	\]
	In our notation reflecting the model this can be written
	\[
		0 \to M_{(-\infty,c)} \to M_{(-\infty,d)} \to 0
	\]
	Applying $\Hom_{\gr S}(-,M_{(a,b)})$ yields
	\[
		\xymatrix{0\ar[r] &\Hom_{\sfD^\mathrm{b}(\gr S)}(M_{(-\infty,d)},M_{(a,b)}) \ar[r]^-f& \Hom_{\sfD^\mathrm{b}(\gr S)}(M_{(-\infty,c)},M_{(a,b)}) \ar[r]^-g & 0}.
	\]
	By Lemma \ref{L:morphisms} we have $\ker(g) \cong \base$ if and only if  $a < c \leq b$ and zero otherwise. If $a < c \leq b$ then, since $c < d$, we either have $a < c < d \leq b$ or $a < c \leq b < d$ and we have $\im(f) \cong \base$ if and only if $a < c < d \leq b$ and it is zero if and only if $a < c \leq b<d$. So we get that
	\[
		\Ext^1_{\sfD^\mathrm{b}(\gr S)}(M_{(c,d)},M_{(a,b)})  \cong \ker(g) \big/ \im(f) \cong \base	
	\]
	if and only if $(a,b)$ and $(c,d)$ touch with $a < c \leq b < d$ and it is $0$ otherwise.

	For the final statement, assume that our arcs touch in the appropriate manner so that $\Ext^1_{\sfD^\mathrm{b}(\gr S)}(M_{(c,d)},M_{(a,b)})  \cong \base$. One can easily verify that there is a non-split short exact sequence
	\begin{displaymath}
	\xymatrix{
	0 \ar[r] & M_{(a,b)} \ar[rr]^-{\begin{pmatrix} 1 & x^{d-b} \end{pmatrix}} &&	M_{(c,b)}\oplus M_{(a,d)} \ar[rr]^-{\begin{pmatrix} x^{d-b} \\ -\pi \end{pmatrix}} && 	M_{(c,d)} \ar[r] & 0
	}
	\end{displaymath}
(keeping in mind that if $c=b$ then $M_{(c,b)} = 0$) which corresponds to a non-trivial map
	\begin{displaymath}
	M_{(c,d)}\stackrel{\eta}{\to} \Sigma M_{(a,b)}.
	\end{displaymath}
	The assertion now follows as we have shown the Ext is $1$-dimensional so the other non-trivial classes are obtained by scaling this map and hence have the same fibre, namely the middle term of the above sequence.
\end{proof}



\section{Thick subcategories}\label{sec_thick}

In this section we use our geometric model to describe the thick subcategories of $\sfD^\mathrm{b}(\gr S)$. Our first description is in terms of collections of arcs satisfying a certain closure condition, which we call saturation, reflecting closure under cones and is very well adapted to the model. It seems fitting, following \cite{IT} and \cite{IS}, to describe these sorts of classifications in terms of non-crossing partitions and we go on to provide such a description in Section~\ref{sec_ncp}.


\subsection{Thick subcategories of the bounded derived category}


\begin{defn}
	We call a set of arcs $\mathcal{S} \subseteq \A$ {\em saturated} if whenever two arcs $(a,b)$ and $(c,d)$ in $\mathcal{S}$ touch with $a \leq c \leq b \leq d$, then those of $(a,c), (c,b), (b,d)$ and $(a,d)$ that are arcs also lie in $\mathcal{S}$. We denote the collection of saturated sets of arcs in $\A$ by $\Sat(\A)$.
	
	If $\mathcal{U} \subseteq \A$ is a set of arcs, we call the smallest saturated subset of $\A$ containing $\mathcal{U}$ the {\em saturation of $\mathcal{U}$} and denote it by $\sat(\mathcal{U})$.
\end{defn}

\begin{rem}
	Note that if $\mathcal{U} \subseteq \A_f$ is a set of arcs of finite length, then so is its saturation: $\sat(\mathcal{U}) \subseteq \A_f$.
\end{rem}

Evidently there are lattice structures on the collection of subsets of arcs $\sfP(\A)$, i.e.\ the powerset of $\A$, and on the collection of saturated sets of arcs $\Sat(\A)$. In both cases the poset structure is given by inclusion and the meet is given by intersection\textemdash{}one sees straight away that if $\mcS$ and $\mcS'$ are saturated then so is $\mcS\cap \mcS'$. Thus both of these posets are complete lattices. We note that joins in $\Sat(\A)$ do not necessarily coincide with those in $\sfP(\A)$ as one may have to take the saturation of the union to obtain the join in $\Sat(\A)$. By the preceding remark $\Sat(\A_f)$, the collection of saturated sets of finite arcs, is almost a sublattice of $\Sat(\A)$. More precisely, it is closed under arbitrary meets and joins in $\Sat(\A)$, however, it doesn't contain the maximal element $\A$.

Our first aim in this section is to identify $\Sat(\A)$ with the lattice of thick subcategories of $\sfD^\mathrm{b}(\gr S)$. To this end we wish to define a pair of lattice maps

\begin{displaymath}
\xymatrix{
\Sat(\A) \ar[rr]<0.5ex>^-\tau \ar@{<-}[rr]<-0.5ex>_-\sigma && \Thick(\sfD^\mathrm{b}(\gr S))
}
\end{displaymath}

comparing these two structures as follows: given a saturated set of arcs $\mcS$ and a thick subcategory $\sfM$ we claim
\begin{displaymath}
\tau(\mcS) = \add\{\Sigma^lM_{(a,b)} \; \vert \; l\in \ZZ \; \text{and} \; (a,b)\in \mcS\}
\end{displaymath}
and
\begin{displaymath}
\sigma(\sfM) = \{(a,b)\in \A \; \vert \; M_{(a,b)}\in \sfM\}.
\end{displaymath}
are well defined morphisms of lattices. It is clear that both assignments, if well defined, are lattice maps preserving arbitrary meets and joins; the rest of this section is essentially devoted to verifying that $\tau(\mcS)$ is indeed thick and $\sigma(\sfM)$ is indeed saturated.

\begin{lem}\label{lem_sigma}
Let $\sfM$ be a thick subcategory of $\sfD^\mathrm{b}(\gr S)$. Then $\sigma(\sfM)$ is a saturated set of arcs.
\end{lem}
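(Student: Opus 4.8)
The plan is to verify directly the closure condition defining saturation. Certainly $\sigma(\sfM)$ is a set of arcs, so the content is the following: suppose $(a,b)$ and $(c,d)$ lie in $\sigma(\sfM)$ and touch with $a \le c \le b \le d$; then $M_{(a,b)}, M_{(c,d)} \in \sfM$, and we must show that each of $M_{(a,c)}, M_{(c,b)}, M_{(b,d)}, M_{(a,d)}$ lies in $\sfM$. (If one of the triples of endpoints collapses, i.e.\ $a = c$, $c = b$, or $b = d$, the corresponding pair is not an arc and its ``module'' is either the zero object or literally $M_{(a,b)}$ or $M_{(c,d)}$, so in those degenerate situations there is nothing to prove.) Since $\sfM$, being triangulated and closed under summands, enjoys the two-out-of-three property for distinguished triangles, it suffices to exhibit each of the four modules above as a summand, cone, or fibre in a triangle whose other two vertices are already known to be in $\sfM$; and the requisite triangles are exactly what Lemma~\ref{L:morphisms} and Lemma~\ref{L:Exts} provide.

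Before that I would record one auxiliary observation: since $S$ is hereditary, for any morphism $\psi$ of finitely generated graded $S$-modules one has $\cone(\psi) \cong \Sigma\ker(\psi) \oplus \coker(\psi)$ in $\sfD^\mathrm{b}(\gr S)$. Indeed $\cone(\psi)$ is represented by a two-term complex with cohomology $\ker\psi$ in degree $-1$ and $\coker\psi$ in degree $0$, and the connecting map of its canonical truncation triangle lives in $\Hom_{\sfD^\mathrm{b}(\gr S)}(\coker\psi, \Sigma^2 \ker\psi) \cong \Ext^2_{\gr S}(\coker\psi, \ker\psi) = 0$, forcing that triangle to split. This is what makes the rest painless.

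Now the argument splits on whether $c < b$ or $c = b$; the touching hypothesis always puts us in (at least) one of these, and when $c = b$ one automatically has $a < c \le b < d$. If $c < b$ then $a \le c < b \le d$, and Lemma~\ref{L:morphisms} gives a non-zero $\varphi \colon M_{(a,b)} \to M_{(c,d)}$ with $\ker\varphi \cong M_{(a,c)}$ and $\coker\varphi \cong M_{(b,d)}$; by the observation above $\cone(\varphi) \cong \Sigma M_{(a,c)} \oplus M_{(b,d)}$, and since $\cone(\varphi) \in \sfM$ and $\sfM$ is thick, both $M_{(a,c)}$ and $M_{(b,d)}$ lie in $\sfM$. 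A one-line computation with the explicit cyclic modules identifies the image of $\varphi$ with $M_{(c,b)}$, so the short exact sequence $0 \to M_{(c,b)} \to M_{(c,d)} \to M_{(b,d)} \to 0$, together with $M_{(c,d)}, M_{(b,d)} \in \sfM$, puts $M_{(c,b)}$ in $\sfM$ as well. Finally, if moreover $a < c$ and $b < d$ (in particular whenever $c = b$), Lemma~\ref{L:Exts} gives a distinguished triangle $M_{(a,b)} \to M_{(c,b)} \oplus M_{(a,d)} \to M_{(c,d)} \to \Sigma M_{(a,b)}$, whose middle term therefore lies in $\sfM$, whence its summand $M_{(a,d)}$ does too; and in the remaining degenerate cases $M_{(a,d)}$ equals $M_{(c,d)}$ (if $a = c$) or $M_{(a,b)}$ (if $b = d$). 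This exhausts all four arcs.

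The only thing needing genuine care is the organisational one — correctly enumerating which of the inequalities in $a \le c \le b \le d$ are equalities and confirming that in each case the relevant lemma applies and the arc it produces is the one we want — together with the small explicit check that $\im\varphi \cong M_{(c,b)}$. The heredity splitting removes all homological content, so I do not anticipate any real obstacle beyond this bookkeeping.
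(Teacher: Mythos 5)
Your proof is correct and follows essentially the same strategy as the paper: verify saturation directly by using Lemma~\ref{L:morphisms} and Lemma~\ref{L:Exts} to produce triangles in $\sfM$ whose cones or summands yield the required arcs, handling degenerate equalities separately. The only small differences are cosmetic. You obtain $M_{(c,b)}$ by identifying it as $\im\varphi$ and invoking the short exact sequence $0 \to M_{(c,b)} \to M_{(c,d)} \to M_{(b,d)} \to 0$, whereas the paper extracts both $M_{(c,b)}$ and $M_{(a,d)}$ simultaneously as summands of the middle term of the triangle from Lemma~\ref{L:Exts}; both routes are valid and require the same two input lemmas. You also make explicit, with a short $\Ext^2$-vanishing argument, the standard hereditary splitting $\cone(\psi) \cong \Sigma\ker\psi \oplus \coker\psi$, which the paper asserts without comment — a reasonable addition. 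One minor caution: your early parenthetical remark that collapsed endpoints mean ``there is nothing to prove'' is too terse as stated (e.g.\ $a=c$ does not by itself dispose of $(b,d)$), but this does no harm because your subsequent case analysis handles every arc correctly.
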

\begin{proof}
Suppose two arcs $(a,b), (c,d) \in \sigma(\sfM)$ touch, where without loss of generality we may assume $a \leq c \leq b \leq d$. 
	
	We first show that those of $(a,c)$ and $(b,d)$ that are arcs lie in $\sigma(\sfM)$. If we have $a \leq c = b \leq d$ then $(a,c) = (a,b)$ and $(b,d) = (c,d)$ which trivially lie in $\sigma(\sfM)$. If, on the other hand, we have $a \leq c < b \leq d$ then by Lemma \ref{L:morphisms} 
	we know that $\Hom_{\sfD^\mathrm{b}(\gr S)}(M_{(a,b)},M_{(c,d)}) \cong \base$. We choose a non-trivial morphism $\varphi \colon M_{(a,b)} \to M_{(c,d)}$. Since $\sfM$ is thick, the cone of $\varphi$ (which is described in Lemma~\ref{L:morphisms})
	\[
	\cone(\varphi) \cong \Sigma \ker \varphi \oplus \coker \varphi \cong \Sigma M_{(a,c)} \oplus M_{(b,d)}
	\]
	lies in $\sfM$, and therefore those of $(a,c)$ and $(b,d)$ that are arcs must also in $\sigma(\sfM)$. 
	
	We next show that those of $(c,b)$ and $(a,d)$ which are arcs also lie in $\sigma(\sfM)$. If we have $a = c$, then $(c,b) = (a,b)$ and $(a,d) = (c,d)$ by assumption lie in $\sigma(\sfM)$. Similarly, if we have $b = d$, then $(c,b) = (c,d)$ and $(a,d) = (a,b)$ lie in $\sigma(\sfM)$. In the final remaining case we have $a < c \leq b < d$, and by Lemma \ref{L:Exts} there is a non-split distinguished triangle
	\begin{eqnarray*}
			M_{(c,d)} \to B \to M_{(a,b)} \to \Sigma M_{(c,d)}
	\end{eqnarray*}
	with $B \cong M_{(c,b)} \oplus M_{(a,d)}$. Since $\sfM$ is closed under extensions and summands we deduce that those of $(c,b)$ and $(a,d)$ which are arcs also lie in $\sigma(\sfM)$. Thus $\sigma(\sfM)$ is a saturated set of arcs as claimed.
\end{proof}

Showing that $\tau$ takes values in thick subcategories is a little less straightforward and requires some preparation. The crux of the matter is that, since our model only describes indecomposable objects and the cones on morphisms between them, we need to know that understanding such cones completely determines the thick subcategory a collection of indecomposable modules generates. As any object is a direct sum of finitely many indecomposable objects it is enough to understand the situation for finite saturated sets of arcs. We will, in fact, prove a stronger result and give a description of the thick subcategory generated by any finite set of arcs from which the result we need follows.

Let us fix a finite set $\mcF\subseteq \A$ of arcs and set $\mcS = \sat(\mcF)$ the saturation of $\mcF$. Clearly $\mcS$ is still a finite collection of arcs. Note that $\A$, together with the lexicographic order, is a linear order. In particular, any finite set of arcs has a minimal element and we denote by $(l,m)$ the minimal arc in $\mcS$. 

\begin{lem}\label{lem_minext}
For any $(a,b)\in \mcS$ distinct from $(l,m)$
\begin{displaymath}
\RHom(M_{(a,b)}, M_{(l,m))} \cong \Ext^1(M_{(a,b)}, M_{(l,m)}).
\end{displaymath}
\end{lem}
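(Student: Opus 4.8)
The plan is to show that $\RHom(M_{(a,b)}, M_{(l,m)})$ is concentrated in cohomological degree $1$ by checking that $\Hom$ and $\Ext^{\geq 2}$ both vanish. Since $S$ is hereditary, $\Ext^{\geq 2}$ vanishes automatically in $\sfD^\mathrm{b}(\gr S)$, so the only thing to verify is that $\Hom_{\sfD^\mathrm{b}(\gr S)}(M_{(a,b)}, M_{(l,m)}) = 0$.

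First I would invoke Lemma~\ref{L:morphisms}: a non-zero morphism $M_{(a,b)} \to M_{(l,m)}$ exists if and only if $(a,b)$ and $(l,m)$ touch with $a \leq l < b \leq m$. So I need to rule this out under the hypothesis that $(l,m)$ is the lexicographically minimal arc of the saturated finite set $\mcS$ and $(a,b) \in \mcS$ is distinct from $(l,m)$. Suppose for contradiction that $a \leq l < b \leq m$. Then I would examine what saturation forces: since $(a,b)$ and $(l,m)$ touch with $a \leq l \leq b \leq m$, saturatedness of $\mcS$ means that each of $(a,l)$, $(l,b)$, $(b,m)$, $(a,m)$ that is an actual arc belongs to $\mcS$. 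The key point is to produce from this an arc in $\mcS$ that is lexicographically strictly smaller than $(l,m)$, contradicting minimality. If $a < l$, then $(a,l)$ is an arc (as $a < l$) lying in $\mcS$, and $(a,l) < (l,m)$ lexicographically since $a < l$; contradiction. Hence $a = l$. But then $(a,b) = (l,b)$ with $l < b$, and $(a,b) \neq (l,m)$ forces $b \neq m$; combined with $b \leq m$ we get $l < b < m$, so $(l,b)$ is an arc in $\mcS$ (it is among the four arcs forced by saturation, being $(a,b)$ itself), and $(l,b) < (l,m)$ lexicographically since $b < m$; contradiction. Either way we contradict minimality of $(l,m)$, so no non-zero morphism exists and $\Hom_{\sfD^\mathrm{b}(\gr S)}(M_{(a,b)}, M_{(l,m)}) = 0$.

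Putting these together: $\RHom(M_{(a,b)}, M_{(l,m)})$ has cohomology only in degrees $0$ and $1$ (heredity), degree $0$ vanishes (the argument above), so $\RHom(M_{(a,b)}, M_{(l,m)}) \cong \Ext^1(M_{(a,b)}, M_{(l,m)})$, as claimed. The main obstacle, such as it is, lies in the bookkeeping of the case analysis on whether $a < l$ or $a = l$ and correctly reading off the lexicographic comparison; there is nothing deep, but one must be careful that the degenerate instances (where one of the four arcs is of the form $(x,x)$ and hence not an arc) do not derail the argument, and that we genuinely use $(a,b) \in \mcS$ rather than just $(a,b) \in \mcF$. I would also note in passing that this lemma is exactly the input needed to run an inductive description of $\thick(\mcF)$, peeling off the minimal arc via a triangle, which is presumably the purpose it serves in the sequel.
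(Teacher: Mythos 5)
Your proof is correct and takes essentially the same approach as the paper: invoke heredity of $S$ to reduce to showing $\Hom$ vanishes, then use Lemma~\ref{L:morphisms} to see a nonzero map would force $a \leq l < b \leq m$, which contradicts lexicographic minimality of $(l,m)$. One small simplification: you do not need saturation at all, since $a \leq l$, $b \leq m$ and $(a,b) \neq (l,m)$ already give $(a,b) <_{\mathrm{lex}} (l,m)$ directly, so the detour through $(a,l)$ in the $a < l$ case is unnecessary.
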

\begin{proof}
By Lemma~\ref{L:morphisms} in order for $\Hom(M_{(a,b)}, M_{(l,m))}\neq 0$ we would need $a\leq l < b \leq m$, but this would violate the minimality of $(l,m)$.
\end{proof}

We wish to use $(l,m)$ to decompose $\mcS$. This will come down to the usual yoga of perpedicular subcategories (albeit viewed through the lens of our model); keeping this in mind we will frequently tacitly identify $\mcS$ with $\tau(\mcS)$. The first observation is that the perpendicular of $(l,m)$ is again saturated.

\begin{lem}\label{lem_satpres}
The subset
\begin{displaymath}
\mcS\perp(l,m) = \{(a,b)\in \mcS\;\vert\; \text{neither}\;\; l<a\leq m< b \;\; \text{nor} \;\; a \leq l < b \leq m \;\; \text{hold}\}
\end{displaymath}
is again saturated.
\end{lem}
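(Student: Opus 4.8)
The plan is to recognise $\mcS\perp(l,m)$ as an intersection of two manifestly saturated sets of arcs, the point being that the condition defining $\mcS\perp(l,m)$ is nothing but a $\RHom$-vanishing condition in $\sfD^\mathrm{b}(\gr S)$. So I would \emph{not} attempt a hands-on combinatorial verification of the saturation axioms, but instead translate into the triangulated model and appeal to Lemma~\ref{lem_sigma}.

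\emph{Step 1 (homological rephrasing).} For any arc $(a,b)$ the objects $M_{(a,b)}$ and $M_{(l,m)}$ are stalk complexes in cohomological degree $0$, and $S$ is hereditary, so
\begin{displaymath}
\RHom_{\sfD^\mathrm{b}(\gr S)}(M_{(a,b)},M_{(l,m)})=0 \iff \Hom_{\gr S}(M_{(a,b)},M_{(l,m)})=0 \ \text{and}\ \Ext^1_{\gr S}(M_{(a,b)},M_{(l,m)})=0.
\end{displaymath}
By Lemma~\ref{L:morphisms} the $\Hom$ is nonzero precisely when $(a,b)$ and $(l,m)$ touch with $a\leq l<b\leq m$; by Lemma~\ref{L:Exts} the $\Ext^1$ (which equals $\Hom_{\sfD^\mathrm{b}(\gr S)}(M_{(a,b)},\Sigma M_{(l,m)})$) is nonzero precisely when $(l,m)$ and $(a,b)$ touch with $l<a\leq m<b$. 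Since each of the inequality strings $a\leq l<b\leq m$ and $l<a\leq m<b$ already forces the two arcs to touch, this shows
\begin{displaymath}
\mcS\perp(l,m)=\{(a,b)\in\mcS \mid \RHom_{\sfD^\mathrm{b}(\gr S)}(M_{(a,b)},M_{(l,m)})=0\}.
\end{displaymath}

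\emph{Step 2 (perpendicular subcategory and Lemma~\ref{lem_sigma}).} Let ${}^{\perp}M_{(l,m)}=\{X\in\sfD^\mathrm{b}(\gr S)\mid\RHom(X,M_{(l,m)})=0\}$. This is a thick subcategory of $\sfD^\mathrm{b}(\gr S)$: it is plainly closed under shifts and summands, and closure under cones follows from the long exact sequences produced by the functors $\Hom(-,\Sigma^iM_{(l,m)})$. Hence, by Lemma~\ref{lem_sigma}, $\sigma({}^{\perp}M_{(l,m)})$ is a saturated set of arcs. Combining this with Step 1 and the triviality that $(a,b)\in\mcS$ implies $M_{(a,b)}\in\sfD^\mathrm{b}(\gr S)$, we get
\begin{displaymath}
\mcS\perp(l,m)=\mcS\cap\sigma({}^{\perp}M_{(l,m)}),
\end{displaymath}
an intersection of two saturated sets of arcs — $\mcS$ because it is by construction a saturation, and $\sigma({}^{\perp}M_{(l,m)})$ by Lemma~\ref{lem_sigma} — and is therefore itself saturated.

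The only step that requires any care is the bookkeeping in Step 1: matching the two configurations $a\leq l<b\leq m$ and $l<a\leq m<b$ that are forbidden in the definition of $\mcS\perp(l,m)$ against the touching configurations occurring in Lemmas~\ref{L:morphisms} and~\ref{L:Exts}, and noting that those configurations automatically entail that the arcs touch so that the ``touch with\dots'' clauses in those lemmas are subsumed. Everything after that is formal. (If one wished to avoid the triangulated category entirely there is also a direct argument: two touching arcs of $\mcS\perp(l,m)$ produce, via the saturation of $\mcS$, four sub-arcs lying in $\mcS$, and a short case analysis on the positions of their endpoints relative to $l$ and $m$ — invoking the lexicographic minimality of $(l,m)$ to exclude the configuration $a\leq l<b\leq m$ for any arc of $\mcS$ other than $(l,m)$ — shows each such sub-arc again lies in $\mcS\perp(l,m)$; but the homological argument above is shorter and conceptually cleaner.)
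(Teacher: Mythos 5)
Your proof is correct and takes a genuinely different route from the paper's. The paper argues combinatorially: it assumes two arcs of $\mcS\perp(l,m)$ touch, invokes saturation of $\mcS$ to produce the four sub-arcs, and then runs a four-case analysis to show each one avoids the two forbidden inequality patterns, using lexicographic minimality of $(l,m)$ at one point to rule out a configuration. You instead observe that the two forbidden patterns are precisely the conditions for $\Hom_{\gr S}(M_{(a,b)},M_{(l,m)})\neq 0$ and $\Ext^1(M_{(a,b)},M_{(l,m)})\neq 0$ in Lemmas~\ref{L:morphisms} and~\ref{L:Exts}, so that (hereditariness of $S$ killing higher Ext) $\mcS\perp(l,m)=\mcS\cap\sigma({}^\perp M_{(l,m)})$, and then apply the already-proved Lemma~\ref{lem_sigma} to the thick subcategory ${}^\perp M_{(l,m)}$ and the stability of saturation under intersection. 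This is non-circular, since Lemma~\ref{lem_sigma} precedes the present lemma and its proof depends only on Lemmas~\ref{L:morphisms} and~\ref{L:Exts}. What your argument buys is brevity and a cleaner conceptual explanation of \emph{why} the set is saturated; it even proves a slightly stronger statement, since no use is made of the minimality of $(l,m)$ within $\mcS$ (which, incidentally, is also not essential in the paper's case analysis — the contradiction in the $(b,d)$ case can be reached directly from $(a,b)\in\mcS\perp(l,m)$ without passing through minimality). What the paper's approach buys is that it stays entirely inside the combinatorics of arcs, so that Lemma~\ref{lem_satpres} remains independent of the homological translation; this keeps the arcs-only portion of the development self-contained. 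Both proofs are sound.
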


\begin{rem}
We should first comment that by Lemmas~\ref{L:morphisms} and \ref{L:Exts} the above is really the left perpendicular to $(l,m)$ in the sense one would guess. Secondly, as a consequence of Lemma~\ref{lem_minext}, for arcs in $\mcS$ distinct from $(l,m)$ it is enough for the first condition to fail, the latter failing automatically.
\end{rem}

\begin{proof}
Suppose $(a,b)$ and $(c,d)$ lie in $\mcS\perp(l,m)$ and touch with $a\leq c\leq b\leq d$. We know, since $\mcS$ is saturated, that any of $(a,c), (c,b), (a,d)$, and $(b,d)$ which are arcs lie in $\mcS$. We need to verify that they also lie in $\mcS\perp(l,m)$. 
\begin{itemize}
\item If $(a,c)$ were an arc with $(a,c) \notin \mcS\perp(l,m)$, then we would have $l < a \leq m < c \leq b$ contradicting $(a,b) \in \mcS\perp(l,m)$.
\item If $(c,b)$ were an arc with $(c,b) \notin \mcS\perp(l,m)$, then we would have $l < c \leq m < b \leq d$ contradicting $(c,d) \in \mcS\perp(l,m)$.
\item If $(b,d)$ were an arc with $(b,d) \notin \mcS\perp(l,m)$, then we would have $l < b \leq m < d$. By minimality of $(l,m) \in \mcS$ we have $l < a \leq c \leq m < d$. Otherwise, if $a \leq l < b \leq m$ this would imply $(a,b) = (l,m)$ contradicting $(a,b) \in \mcS\perp(l,m)$. However, this contradicts $(c,d) \in \mcS\perp(l,m)$.
\item If $(a,d)$ were an arc with $(a,d) \notin \mcS\perp(l,m)$, then we would have $l < a \leq m < d$. If $l < a \leq m < b < d$ this contradicts $(a,b) \in \mcS\perp(l,m)$ and otherwise, if $l < c \leq b \leq m < d$ this contradicts $(c,d) \in \mcS\perp(l,m)$.
\end{itemize}
\end{proof}

Before coming to the decomposition statement we need to introduce some standard terminology.

\begin{defn}
Let $\sfT$ be a $k$-linear triangulated category. An object $E\in \sfT$ is \emph{exceptional} if 
\begin{displaymath}
\RHom(E,E) \cong k,
\end{displaymath}
i.e.\ the derived endomorphism ring $E$ is simply $k$ concentrated in degree zero. 

An \emph{exceptional collection} $(E_1,\ldots, E_n)$ in $\sfT$ is a ordered sequence of exceptional objects $E_i$ such that $\RHom(E_j,E_i)=0$ for $j>i$. It is \emph{strong} if $\RHom(E_i,E_j) \cong \Hom(E_i,E_j)$ for $i\leq j$. 
\end{defn}

We have, without making it explicit, already encountered many exceptional objects.

\begin{ex}
By Lemmas~\ref{L:morphisms} and \ref{L:Exts} every indecomposable object of $\sfD^\mathrm{b}(\gr S)$ is exceptional.
\end{ex}

The following theorem gives many examples in $\sfD^\mathrm{b}(\gr S)$ of exceptional collections; besides being rather natural in its own right it will serve a technical role in the paper by allowing us to reduce to understanding indecomposable objects when considering thick subcategories.

\begin{thm}\label{thm_exceptional}
There is a strong exceptional collection
\begin{displaymath}
\mathbf{E} = (M_{(l_1,m_1)},\ldots, M_{(l_n,m_n)}) \;\; \text{with} \;\; (l_i,m_i)\in \mcS \;\text{for}\; 1\leq i\leq n
\end{displaymath}
in $\sfD^\mathrm{b}(\gr S)$ such that $\sat((l_i,m_i)\;\vert\; 1\leq i \leq n) = \mcS$. Moreover, there is an isomorphism
\begin{displaymath}
\End(\bigoplus_{i=1}^n M_{(l_i,m_i)}) \cong \base A_{r_1} \times \cdots \times \base A_{r_k}
\end{displaymath}
where $r_1+\cdots +r_k = n$ and each quiver is linearly oriented.
\end{thm}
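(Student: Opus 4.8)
The plan is to build $\mathbf{E}$ explicitly by an iterative ``peeling'' procedure governed by the lexicographic order on $\A$ --- the combinatorial shadow of passing to successive left perpendicular subcategories --- and then to read the endomorphism algebra straight off the arc combinatorics. First I would set $\mcS_0=\mcS$ and, having produced the saturated set $\mcS_{i-1}$, let $(l_i,m_i)$ be its lexicographically least arc, put $\gamma_i=(l_i,m_i)$, and pass to $\mcS_i=\mcS_{i-1}\perp\gamma_i$, which is again saturated by Lemma~\ref{lem_satpres} applied to $\mcS_{i-1}$ in place of $\mcS$. Since $\mcS$ is finite and $\gamma_i\notin\mcS_i$, this terminates with $\mcS_n=\varnothing$, and the candidate collection is $\mathbf{E}=(M_{\gamma_1},\dots,M_{\gamma_n})$ with $\gamma_1<_{\mathrm{lex}}\gamma_2<_{\mathrm{lex}}\cdots<_{\mathrm{lex}}\gamma_n$. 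The crucial bookkeeping is to pin down what leaves $\mcS_{i-1}$ at stage $i$: by the remark after Lemma~\ref{lem_satpres} an arc $(a,b)\in\mcS_{i-1}$ other than $\gamma_i$ is discarded precisely when $l_i<a\le m_i<b$, and then saturation of $\mcS_{i-1}$ forces $(l_i,a)\in\mcS_{i-1}$ while minimality of $\gamma_i$ forces $a=m_i$; so $\mcS_i=\mcS_{i-1}\setminus\bigl(\{\gamma_i\}\cup\{(m_i,\beta)\in\mcS_{i-1}\}\bigr)$.

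Next I would check that $\mathbf{E}$ is a strong exceptional collection with $\sat(\{\gamma_1,\dots,\gamma_n\})=\mcS$. Each $M_{\gamma_i}$ is exceptional because it is indecomposable. For $j>i$ one has $\gamma_j\in\mcS_i=\mcS_{i-1}\perp\gamma_i$, so neither inequality in the definition of $\perp$ holds for $\gamma_j$; by Lemmas~\ref{L:morphisms} and~\ref{L:Exts} this is exactly the vanishing of $\Hom(M_{\gamma_j},M_{\gamma_i})$ and $\Ext^1(M_{\gamma_j},M_{\gamma_i})$, and since $S$ is hereditary there are no higher $\Ext$'s, so $\RHom(M_{\gamma_j},M_{\gamma_i})=0$. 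For strongness, if $i<j$ then $\gamma_j\in\mcS_{i-1}$ has $\gamma_i$ below it, hence $l_i\le l_j$; non-vanishing of $\Ext^1(M_{\gamma_i},M_{\gamma_j})$ would require $l_j<l_i$ by Lemma~\ref{L:Exts}, so it vanishes and $\RHom(M_{\gamma_i},M_{\gamma_j})\cong\Hom(M_{\gamma_i},M_{\gamma_j})$. For the saturation identity I would argue by downward induction on $i$ that $\mcS_{i-1}\subseteq\sat(\{\gamma_i,\dots,\gamma_n\})$: the only arcs of $\mcS_{i-1}$ not evidently already present are the $(m_i,\beta)$, and for each such the arc $(l_i,\beta)$ lies in $\mcS_i$ (it is in $\mcS_{i-1}$ by saturation and survives the $\perp\gamma_i$ step), so $(m_i,\beta)$ reappears as one of the pieces of the touching pair $\gamma_i,(l_i,\beta)$.

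For the endomorphism algebra, the key combinatorial fact I would prove is that for $i\le j$ one has $\Hom(M_{\gamma_i},M_{\gamma_j})\neq0$ if and only if $l_i=l_j$: by Lemma~\ref{L:morphisms} a non-zero map forces $l_i\le l_j<m_i\le m_j$, and if $l_i<l_j$ then $(l_i,l_j)$ is an arc with $\gamma_i$ and $\gamma_j$ touching, so $(l_i,l_j)\in\mcS_{i-1}$ by saturation --- but $l_j<m_i$ makes it lexicographically smaller than $\gamma_i$, a contradiction; conversely $l_i=l_j$ with $i<j$ gives $m_i<m_j$ and the inequalities hold. Then I would group $\gamma_1,\dots,\gamma_n$ by common left endpoint into $k$ classes of sizes $r_1,\dots,r_k$ (so $\sum_t r_t=n$): $\Hom$'s between different classes vanish, so the endomorphism algebra is the product over classes, and within the class with left endpoint $l$, whose arcs are $(l,m^{(1)}),\dots,(l,m^{(r_t)})$ with $m^{(1)}<\cdots<m^{(r_t)}$, each non-zero map $M_{(l,m^{(s)})}\to M_{(l,m^{(s+1)})}$ is multiplication by a power of $y$ and these compose nontrivially along the whole chain (a one-line computation in $\gr S$), so that factor is the path algebra of the linearly oriented $A_{r_t}$ quiver. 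This yields $\End(\bigoplus_i M_{\gamma_i})\cong\base A_{r_1}\times\cdots\times\base A_{r_k}$.

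The hard part will be the first step: establishing the exact identity $\mcS_i=\mcS_{i-1}\setminus(\{\gamma_i\}\cup\{(m_i,\beta)\})$, together with the ``same left endpoint'' criterion used for the endomorphism algebra. Both are instances of a single move --- exploiting saturation of $\mcS_{i-1}$ to manufacture an arc of the form $(l_i,\cdot)$ that would violate the minimality of $\gamma_i$ --- and once these two combinatorial facts are secured, the homological assertions follow mechanically from Lemmas~\ref{L:morphisms} and~\ref{L:Exts} and the fact that $S$ is hereditary.
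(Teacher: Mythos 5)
Your proof is correct and follows the paper's approach: iteratively peel off the lexicographically minimal arc and pass to the left perpendicular, using Lemmas~\ref{L:morphisms}, \ref{L:Exts}, \ref{lem_satpres} and \ref{lem_minext} to read off exceptionality, strongness, and the endomorphism algebra from the arc inequalities. Your bookkeeping is organised a little differently --- you make explicit that $\mcS_i = \mcS_{i-1}\setminus\bigl(\{(l_i,m_i)\}\cup\{(m_i,\beta)\in\mcS_{i-1}\}\bigr)$ and run the saturation argument as a downward induction, and you establish the criterion $\Hom(M_{(l_i,m_i)},M_{(l_j,m_j)})\neq 0 \iff l_i=l_j$ directly rather than via the paper's factorisation through $(l_{i+1},m_{i+1})$ --- but these are the same underlying observations.
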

\begin{proof}
We take $(l_1,m_1) = (l,m)$, the minimal arc in $\mcS$. We set $\mcS_0 = \mcS$ and iteratively define for $i \geq 1$
\[
 \mcS_i = \mcS_{i-1}\perp(l_i,m_i),
\]
where $(l_i,m_i)$ is the minimal arc in $\mcS_{i-1}$. This is well-defined: by iteratively applying Lemma~\ref{lem_satpres} we know $\mcS_{i-1}\perp(l_i,m_i)$ is saturated and, since it is clearly finite, there exists such a minimal arc $(l_i,m_i)$ in $\mcS_{i-1}\perp(l_i,m_i)$. As the cardinality of the $\mcS_i$ decreases this process must terminate in finitely many, say $n$, steps and we obtain a sequence of arcs $(l_i,m_i)$ for $1\leq i \leq n$. Each $M_{(l_i,m_i)}$ is exceptional by Lemma~\ref{L:Exts} and
\begin{displaymath}
\RHom(M_{(l_j,m_j)}, M_{(l_i,m_i)}) = 0 \;\; \text{for} \;\; j>i
\end{displaymath}
by construction, so the sequence is an exceptional collection as claimed.

It is clear that $\sat((l_i,m_i)\;\vert\; 1\leq i \leq n) \subseteq \mcS$. We prove the reverse containment by induction on $n$. For the base case, suppose $\mcS\perp(l_1,m_1) = \varnothing$ but there is some $(a,b)\neq (l_1,m_1)$ in $\mcS$. Then by Lemma~\ref{lem_minext} there must be an $\Ext^1$ from $(a,b)$ to $(l_1,m_1)$ and so using Lemma~\ref{L:Exts} we would have to have $l_1<a\leq m_1<b$. By the same lemma we know the fibre of any such non-trivial map would have a summand indexed by $(l_1,b)$. But $(l_1,b)\in \mcS\perp(l_1,m_1)$ which is absurd.

Suppose then that we know the statement for maximal such sequences of length less than $n$ and let $(a,b)\in \mcS$. Denoting $\sat((l_i,m_i)\;\vert\; 1\leq i \leq n)$ by $\mcS'$ we need to show that $(a,b)\in \mcS'$.
By the inductive hypothesis, we have
\[
 \mcS\perp(l_1,m_1) = \sat((l_i,m_i)\; \vert \; 2\leq i \leq n) \subseteq \mcS'.
\]
Therefore, if we had $(a,b) \in \mcS\perp(l_1,m_1)$ we would be done, so assume this is not the case.
Then there is a non-trivial map from $M_{(a,b)}$ to some shift of $M_{(l_1,m_1)}$ and we know it must be an $\Ext^1$ by Lemma~\ref{lem_minext}. Thus $l_1< a\leq m_1 <b$ and the fibre of such a non-trivial map has summands indexed by $(a,m_1)$ and $(l_1,b)$ which lie in $\mcS\perp(l_1,m_1) \subseteq \mcS'$. There is then a degree $0$ map $M_{(l_1,m_1)} \to M_{(l_1,b)}$, by Lemma \ref{L:morphisms}, from whose cone we obtain $(m_1, b)$ which is also in $\mcS'$ since $(l_1,m_1)$ and $(l_1,b)$ are. Thus $(a,m_1)$ and $(m_1,b)$ lie in $\mcS'$ and glue to give $(a,b)\in \mcS'$. This proves $\mcS'=\mcS$ as claimed.

We next show $\mathbf{E}$ is strong. Say $i<j$ and consider $\Ext^1(M_{(l_i,m_i)},M_{(l_j,m_j)})$. This is non-trivial if and only if $l_j<l_i\leq m_j <m_i$. But, by construction, $(l_i,m_i)$ is minimal under the lexicographic ordering in $\mcS\perp(l_{i-1},m_{i-1})$ i.e.\ we know that $(l_i,m_i) <_\mathrm{lex} (l_j,m_j)$ and so $l_j\nless l_i$. Hence the only possible morphisms are of degree zero as required.

To compute the endomorphism algebra, we observe that if there is a map $M_{(l_i,m_i)} \to M_{(l_j,m_j)}$ for $i < j$, then it factors via $M_{(l_{i+1},m_{i+1})}$: indeed, if we are given such a map then by Lemma \ref{L:morphisms} we have
\[
 l_i \leq l_j < m_i \leq m_j.
\]
In fact, we must have 
\[
 l_i = l_j < m_i < m_j.
\]
To see this, first observe that if $l_i < l_j < m_i \leq m_j$ then, if the last inequality were strict this would contradict $(l_j, m_j) \in \mcS_{i-1}\perp(l_i,m_i)$. Thus we must have $l_i = l_j$ or $m_i = m_j$. If $m_i = m_j$, then because $\mcS_{i-1}$ is saturated and both $(l_i,m_i)$ and $(l_j,m_j)$ lie in $\mcS_{i-1}$, we get that $(l_i,l_j) \in \mcS_{i-1}$, contradicting the minimality of $(l_i,m_i)$ in $\mcS_{i-1}$. This shows the only possible configuration is $l_i = l_j < m_i < m_j$. By minimality of $(l_{i+1},m_{i+1}) \in \mcS_i$ we deduce that
\begin{align}\label{E:factoring}
 l_i = l_{i+1}=l_j < m_i < m_{i+1} < m_j,
\end{align}
and the factorisation assertion follows from Lemma \ref{L:morphisms}. It follows directly from (\ref{E:factoring}) and Lemma \ref{L:morphisms} that the endomorphism algebra is given by a product of type $A$ quivers with no relations.
\end{proof}

\begin{cor}\label{lem_tau}
Let $\mcS$ be a saturated set of arcs as above. Then $\tau(\mcS)$ is a thick subcategory of $\sfD^\mathrm{b}(\gr S)$.
\end{cor}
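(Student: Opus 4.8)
The plan is to isolate the real content (closure under cones), reduce it to the case of a finite saturated set, and there apply Theorem~\ref{thm_exceptional}. Closure of $\tau(\mcS)$ under $\Sigma$ is built into the definition, and closure under direct summands is automatic since $\tau(\mcS)$ is an $\add$-subcategory. So we only need closure under cones. Here it is useful to note that, as $S$ is hereditary, $\sfD^\mathrm{b}(\gr S)$ is Krull-Schmidt with the $\Sigma^l M_{(a,b)}$ its indecomposable objects; in particular any object of $\tau(\mcS)$ is a \emph{finite} direct sum of such shifts. Thus, given $f\colon X\to Y$ with $X,Y\in\tau(\mcS)$, only finitely many arcs, say $\mcF\subseteq\mcS$, occur among the summands of $X$ and $Y$; since $\mcS$ is saturated, $\sat(\mcF)\subseteq\mcS$, and $X,Y\in\tau(\sat(\mcF))$. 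It therefore suffices to prove that for any \emph{finite} set of arcs $\mcF$ one has
\begin{displaymath}
\thick\bigl(M_{(a,b)}\mid (a,b)\in\mcF\bigr)=\tau(\sat(\mcF)),
\end{displaymath}
for then $\cone(f)\in\tau(\sat(\mcF))\subseteq\tau(\mcS)$.

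The inclusion $\tau(\sat(\mcF))\subseteq\thick(M_{(a,b)}\mid (a,b)\in\mcF)$ I would obtain from Lemma~\ref{lem_sigma}: the set $\sigma\bigl(\thick(M_{(a,b)}\mid(a,b)\in\mcF)\bigr)$ is saturated and contains $\mcF$, hence contains $\sat(\mcF)$; since a thick subcategory is closed under $\Sigma$ and summands, this gives the inclusion. (Equivalently, one can argue directly from Lemmas~\ref{L:morphisms} and~\ref{L:Exts} that each arc adjoined in a single saturation step indexes a summand of the cone of a morphism between modules already present.)

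For the reverse inclusion I would apply Theorem~\ref{thm_exceptional} to $\mcF$, producing a strong exceptional collection $\mathbf{E}$ whose objects are the $M_{(l_i,m_i)}$, with generating arcs $\mathcal{G}=\{(l_i,m_i)\mid 1\le i\le n\}\subseteq\sat(\mcF)$, $\sat(\mathcal{G})=\sat(\mcF)$, and $\Lambda:=\End(\bigoplus_i M_{(l_i,m_i)})\cong\base A_{r_1}\times\cdots\times\base A_{r_k}$ with all quivers linearly oriented. Because $\mathbf{E}$ is strong and generates, $\RHom(\bigoplus_i M_{(l_i,m_i)},-)$ restricts to a triangulated equivalence $\thick(\mathbf{E})\xrightarrow{\sim}\sfD^\mathrm{b}(\module\Lambda)$ (as $\Lambda$ is hereditary, its perfect complexes exhaust $\sfD^\mathrm{b}(\module\Lambda)$). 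Every object of $\sfD^\mathrm{b}(\module\Lambda)$ is a finite direct sum of shifts of indecomposable $\Lambda$-modules, so every indecomposable object of $\thick(\mathbf{E})$ is a shift of some $M_{(a,b)}$; writing $\mcR=\sigma(\thick(\mathbf{E}))$ for the set of arcs arising this way, we get $|\mcR|=\sum_j\binom{r_j+1}{2}$, the number of indecomposable $\Lambda$-modules. By Lemma~\ref{lem_sigma}, $\mcR$ is saturated and contains $\mathcal{G}$, hence $\sat(\mcF)=\sat(\mathcal{G})\subseteq\mcR$; and from the structure recorded in the proof of Theorem~\ref{thm_exceptional} (by~(\ref{E:factoring}), $\mathcal{G}$ is a disjoint union of $k$ ``fans'' of arcs through a common left endpoint with strictly increasing right endpoints, the $j$th of size $r_j$, and arcs from different fans do not touch) one computes $|\sat(\mathcal{G})|=\sum_j\binom{r_j+1}{2}$ as well, a fan of size $r$ saturating to exactly $\binom{r+1}{2}$ arcs. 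Comparing cardinalities gives $\mcR=\sat(\mcF)$, so every object of $\thick(\mathbf{E})=\thick(M_{(a,b)}\mid(a,b)\in\mcF)$ is a finite direct sum of shifts of modules $M_{(a,b)}$ with $(a,b)\in\sat(\mcF)$, i.e.\ lies in $\tau(\sat(\mcF))$. This proves the displayed equality, hence the corollary.

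The only genuine obstacle is this reverse inclusion: one must show that forming the thick closure of a finite saturated family of indecomposables produces no new indecomposables. Theorem~\ref{thm_exceptional} reduces the problem to the completely explicit representation theory of $\prod_j\base A_{r_j}$; what remains is bookkeeping, either the cardinality comparison above, or equivalently, tracing the indecomposable $\Lambda$-modules (the interval modules, realised by Lemma~\ref{L:morphisms} as cokernels of maps among the $M_{(l_i,m_i)}$) through the tilting equivalence and checking they correspond to arcs in $\sat(\mcF)$.
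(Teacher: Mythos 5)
Your argument is correct, but it takes a genuinely different route from the paper's. Both proofs reduce to the case of a finite saturated set (since a cone only involves finitely many indecomposables, using the Krull--Schmidt property of $\sfD^\mathrm{b}(\gr S)$), and both lean on Theorem~\ref{thm_exceptional} to transport the problem to type~$A$. From there the paper argues qualitatively: by Lemmas~\ref{L:morphisms} and \ref{L:Exts} the subcategory $\tau(\mcS)$ is closed under cones of morphisms \emph{between indecomposables}, and then it asserts that in $\sfD^\mathrm{b}(\base A_n)$ any additive, shift- and summand-closed subcategory with this property is automatically thick. You instead establish the stronger explicit equality $\thick(M_{(a,b)}\mid(a,b)\in\mcF)=\tau(\sat(\mcF))$ by a cardinality comparison: transporting along the tilting equivalence $\thick(\mathbf{E})\simeq\sfD^\mathrm{b}(\module\Lambda)$ for $\Lambda=\prod_j\base A_{r_j}$, counting indecomposables there as $\sum_j\binom{r_j+1}{2}$, and matching that against $|\sat(\mathcal{G})|$. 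Your approach is more self-contained (it avoids the paper's unproved qualitative assertion about type~$A$ derived categories), at the cost of a combinatorial computation; the paper's is shorter but relies on that appeal.

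One step in your count deserves to be made explicit. You note, correctly, that by \eqref{E:factoring} the generating arcs $\mathcal{G}$ split into fans sharing a common left endpoint and that arcs from different fans do not touch. But to conclude that $\sat(\mathcal{G})$ is the disjoint union of the fan saturations (and hence has cardinality $\sum_j\binom{r_j+1}{2}$), you additionally need that the \emph{saturations} of the different fans do not touch one another, i.e.\ that no new interactions are created when you close up each fan. This is true: if $(x,y)$ has both endpoints among $\{l,m_1,\dots,m_r\}$, $(x',y')$ has both endpoints among $\{l',m'_1,\dots,m'_s\}$, and they touch, a short case analysis produces an arc $(l,m_p)$ in the first fan touching an arc $(l',m'_q)$ in the second, contradicting the non-touching hypothesis. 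You should include this check (or at least flag it), since otherwise the equality $|\sat(\mathcal{G})|=\sum_j\binom{r_j+1}{2}$ is only an inequality $\geq$ from below, while the containment $\sat(\mcF)\subseteq\mcR$ gives the opposite inequality and the proof would still close, but only after noticing that both bounds point the same way. As written the bookkeeping is slightly glossed over; once made precise the argument is complete and correct.
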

\begin{proof}
By definition $\tau(\mcS)$ is closed under sums, summands, and suspensions. Thus it is enough to show it is also closed under taking cones. Computing the cone between two objects involves only finitely many indecomposables and so it is sufficient to demonstrate that $\tau(\mcS)$ is thick when $\mcS$ is finite. 

By Lemmas~\ref{L:morphisms} and \ref{L:Exts} the fact that $\mcS$ is saturated implies that $\tau(\mcS)$ is closed under taking cones of morphisms between indecomposables. This is all one needs to check: by the previous theorem, and the remark following it, $\thick(\tau(\mcS))$ is equivalent to the bounded derived category of $\base A_n$ for some $n$ and any subcategory of such a category which is closed under taking cones of maps between indecomposable objects is thick.
\end{proof}

With these results in hand we can now dispose easily of the classification result.

\begin{thm}\label{thm_saturatedclassification}
	The morphisms $\tau$ and $\sigma$ give an explicit lattice isomorphism
	\begin{displaymath}
	\Sat(\A) \cong \Thick(\sfD^\mathrm{b}(\gr S)),
	\end{displaymath}
	which restricts to an isomorphism
	\begin{displaymath}
	\Sat(\A_f) \cong \Thick(\sfD^\mathrm{b}_\mathrm{tors}(\gr S)).
	\end{displaymath}
\end{thm}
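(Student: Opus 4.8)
The plan is to show that $\sigma$ and $\tau$ are mutually inverse and then to read off the restriction to the torsion part. By Lemma~\ref{lem_sigma} and Corollary~\ref{lem_tau} both $\sigma$ and $\tau$ are well defined, and by the discussion preceding Lemma~\ref{lem_sigma} they then preserve arbitrary meets and joins; hence once we know they are mutually inverse the lattice isomorphism follows formally.

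First I would check $\sigma\tau=\mathrm{id}_{\Sat(\A)}$. The inclusion $\mcS\subseteq\sigma(\tau(\mcS))$ holds by definition. Conversely, if $(a,b)\in\sigma(\tau(\mcS))$ then $M_{(a,b)}$ is a direct summand in $\sfD^\mathrm{b}(\gr S)$ of a finite sum $\bigoplus_k\Sigma^{l_k}M_{(c_k,d_k)}$ with every $(c_k,d_k)\in\mcS$. Since every $M_{(c,d)}$ is a stalk complex in cohomological degree $0$, applying the additive functor $H^0$ exhibits $M_{(a,b)}$ as a summand in $\gr S$ of $\bigoplus_{k\colon l_k=0}M_{(c_k,d_k)}$. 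As $\gr S$ is Krull--Schmidt and $M_{(a,b)}$ is indecomposable, $M_{(a,b)}\cong M_{(c_k,d_k)}$ for some such $k$, and injectivity of $\varphi$ forces $(a,b)=(c_k,d_k)\in\mcS$.

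Next I would check $\tau\sigma=\mathrm{id}$. The inclusion $\tau(\sigma(\sfM))\subseteq\sfM$ is immediate: $\sfM$ is thick and contains $M_{(a,b)}$ for all $(a,b)\in\sigma(\sfM)$, so it contains $\add\{\Sigma^lM_{(a,b)}\mid l\in\ZZ,\ (a,b)\in\sigma(\sfM)\}$. For the reverse inclusion take $X\in\sfM$. Because $S$ is hereditary, $X$ is isomorphic in $\sfD^\mathrm{b}(\gr S)$ to $\bigoplus_i\Sigma^{-i}H^i(X)$, a finite sum of shifts of its cohomology modules. Each $\Sigma^{-i}H^i(X)$ is a summand of $X$, hence lies in $\sfM$, and therefore so does each $H^i(X)$. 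Decomposing $H^i(X)$ into indecomposables (again using that $\gr S$ is Krull--Schmidt), each indecomposable summand $M_{(a,b)}$ of $H^i(X)$ lies in $\sfM$, so $(a,b)\in\sigma(\sfM)$; consequently $X\in\tau(\sigma(\sfM))$.

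Finally, for the restriction, observe that $\sfD^\mathrm{b}_\mathrm{tors}(\gr S)$ is itself thick in $\sfD^\mathrm{b}(\gr S)$ (having finite length total cohomology is detected by a Serre subcategory of $\gr S$), so its thick subcategories are precisely those thick subcategories of $\sfD^\mathrm{b}(\gr S)$ contained in it. Using $\length S/(y^i)(j)=l(j-i,i)$, the module $M_{(a,b)}$ has finite length exactly when $(a,b)\in\A_f$; together with the two identities above this yields $\sfM\subseteq\sfD^\mathrm{b}_\mathrm{tors}(\gr S)\iff\sigma(\sfM)\subseteq\A_f$ and $\tau(\mcS)\subseteq\sfD^\mathrm{b}_\mathrm{tors}(\gr S)\iff\mcS\subseteq\A_f$, and since $\Sat(\A_f)=\{\mcS\in\Sat(\A)\mid\mcS\subseteq\A_f\}$ by the remark after the definition of saturation, the isomorphism restricts as stated. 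I expect no real difficulty here: the substantive point — that $\tau(\mcS)$ is thick — was settled in Corollary~\ref{lem_tau} with the help of Theorem~\ref{thm_exceptional}, and the only step above that merits a moment's care is the use of heredity of $S$ to split an arbitrary object of $\sfD^\mathrm{b}(\gr S)$ into shifts of its cohomology.
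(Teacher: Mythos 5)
Your proof is correct and follows essentially the same route as the paper's: establish well-definedness via Lemma~\ref{lem_sigma} and Corollary~\ref{lem_tau}, then verify $\sigma\tau = \mathrm{id}$ and $\tau\sigma = \mathrm{id}$ by unwinding definitions, and deduce the restriction from the observation that $M_{(a,b)}$ has finite length precisely when $(a,b)\in\A_f$. The only difference is that you spell out what the paper leaves implicit in ``since $\sfM$ is thick''\textemdash{}namely that heredity of $S$ lets one split any object of $\sfD^\mathrm{b}(\gr S)$ into shifts of its cohomology and that $\gr S$ is Krull--Schmidt\textemdash{}which is a reasonable thing to make explicit.
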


\begin{proof}
We have proved that $\tau$ and $\sigma$ are well defined maps of lattices so it just remains to show that they are indeed inverse. This boils down to chasing through the definitions as follows.

	Let $\mathcal{S}$ be a saturated set of arcs. We have
	\[
		\sigma\tau(\mcS) = \sigma(\add \{\Sigma^l M_{(a,b)} \mid l \in \ZZ \; \text{and}\; (a,b) \in \mathcal{S} \}) = \{(a,b)\in \A \; \vert\; (a,b)\in \mcS\} =  \mathcal{S},
	\]
	so $\sigma$ is a left inverse for $\tau$.
	On the other hand, let $\sfM$ be a thick subcategory of $\sfD^\mathrm{b}(\gr S)$. Then
	\begin{align*}
		\tau\sigma(\sfM) &= \add \{ \Sigma^l M_{(a,b)} \mid l \in \ZZ \; \text{and} \; (a,b) \in \sigma(\sfM)\}) \\
		&= \add \{ \Sigma^l M_{(a,b)} \mid l \in \ZZ\; \text{and}\; M_{(a,b)} \in \sfM \}) \\
		&= \sfM,
	\end{align*}
	since $\sfM$ is thick. Thus $\tau$ and $\sigma$ are inverse.
	
	The restricted bijection follows immediately by recalling that for any finite length arc $(a,b)$ the corresponding module $M_{(a,b)}$ has finite length and so $\tau$ takes $\A_f$ to $\sfD^\mathrm{b}_\mathrm{tors}(\gr S)$.
\end{proof}

\begin{cor}
	There is a commutative diagram
	\begin{displaymath}
	\xymatrix{
	\Sat(\A) \ar[r]^-\sim & \Thick(\sfD^\mathrm{b}(\gr R)) \\
	\Sat(\A_f) \ar[u] \ar[r]_-\sim & \Thick(\sfD^\mathrm{perf}(\gr R)) \ar[u]
	}
	\end{displaymath}
	where the horizontal maps are lattice isomorphisms and the vertical maps are lattice inclusions.
\end{cor}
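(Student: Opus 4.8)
The plan is to deduce this corollary immediately from Theorem~\ref{thm_saturatedclassification} and the Koszul duality equivalence of Theorem~\ref{thm_bgg}, together with the elementary fact that an exact equivalence of essentially small triangulated categories induces an isomorphism between the lattices of thick subcategories. First I would record these generalities. If $F\colon \mathsf{A}\to\mathsf{B}$ is an exact equivalence then, since $F$ carries distinguished triangles to distinguished triangles and preserves direct summands, it sends thick subcategories to thick subcategories; the assignment $\sfM\mapsto F(\sfM)$ (the essential image, which for an equivalence is again thick) is then an isomorphism of lattices $\Thick(\mathsf{A})\xrightarrow{\sim}\Thick(\mathsf{B})$, with inverse induced by any quasi-inverse of $F$. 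If in addition $\mathsf{A}'\subseteq\mathsf{A}$ is thick and $F$ restricts to an equivalence $\mathsf{A}'\to\mathsf{B}'$ onto a thick subcategory $\mathsf{B}'\subseteq\mathsf{B}$, then the resulting square of lattice maps commutes on the nose, and the two vertical maps $\Thick(\mathsf{A}')\to\Thick(\mathsf{A})$, $\Thick(\mathsf{B}')\to\Thick(\mathsf{B})$ are inclusions of lattices: a thick subcategory of $\mathsf{A}'$ is precisely a thick subcategory of $\mathsf{A}$ contained in $\mathsf{A}'$ (as $\mathsf{A}'$ is itself thick in $\mathsf{A}$), meets are intersections on either side, and a join $\thick(\sfM\cup\sfM')$ of thick subcategories of $\mathsf{A}'$ formed in $\mathsf{A}$ already lies inside $\mathsf{A}'$.

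I would then apply this to the equivalence $\phi$ of Theorem~\ref{thm_bgg}, which restricts to an equivalence $\sfD^\mathrm{perf}(\gr R)\to\sfD^\mathrm{b}_\mathrm{tors}(\gr S)$ between thick subcategories, to obtain a commutative square
\[
\xymatrix{
\Thick(\sfD^\mathrm{b}(\gr R)) \ar[r]^-\sim & \Thick(\sfD^\mathrm{b}(\gr S)) \\
\Thick(\sfD^\mathrm{perf}(\gr R)) \ar[u] \ar[r]^-\sim & \Thick(\sfD^\mathrm{b}_\mathrm{tors}(\gr S)) \ar[u]
}
\]
with horizontal lattice isomorphisms and vertical lattice inclusions. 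Theorem~\ref{thm_saturatedclassification} supplies the companion commutative square
\[
\xymatrix{
\Sat(\A) \ar[r]^-\sim & \Thick(\sfD^\mathrm{b}(\gr S)) \\
\Sat(\A_f) \ar[u] \ar[r]^-\sim & \Thick(\sfD^\mathrm{b}_\mathrm{tors}(\gr S)) \ar[u]
}
\]
whose left-hand vertical map is the inclusion $\Sat(\A_f)\hookrightarrow\Sat(\A)$; this is a lattice inclusion by the remark following the definition of saturation, since the saturation of a finite set of finite arcs is again a set of finite arcs and $\Sat(\A_f)$ is closed under the meets and joins of $\Sat(\A)$. Finally I would paste the two squares along their shared column, i.e.\ compose the horizontal isomorphisms $\Sat(\A)\xrightarrow{\sim}\Thick(\sfD^\mathrm{b}(\gr S))\xrightarrow{\sim}\Thick(\sfD^\mathrm{b}(\gr R))$ and likewise along the bottom row, which produces exactly the asserted diagram.

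There is really no obstacle to overcome here, since every component has already been established; the only points that merit a sentence of justification are the two flagged above — that $\sfD^\mathrm{perf}(\gr R)$ being thick in $\sfD^\mathrm{b}(\gr R)$ (and $\sfD^\mathrm{b}_\mathrm{tors}(\gr S)$ in $\sfD^\mathrm{b}(\gr S)$) upgrades the corresponding vertical arrow from a mere lattice homomorphism to an honest inclusion, and that composing the two horizontal isomorphisms preserves the lattice structure, which is automatic.
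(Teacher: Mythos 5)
Your proof is correct and takes essentially the same approach as the paper, which simply cites Theorem~\ref{thm_saturatedclassification} together with the BGG correspondence of Theorem~\ref{thm_bgg}. You have only spelled out the routine transport of $\Thick(-)$ along an exact equivalence and the verification that $\Sat(\A_f)\hookrightarrow\Sat(\A)$ preserves meets and joins, both of which the paper treats as immediate.
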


\begin{proof}
This follows immediately from the theorem above by applying the BGG correspondence (as recalled in Theorem~\ref{thm_bgg}).
\end{proof}


\subsection{Thick subcategories and non-crossing partitions}\label{sec_ncp}
We now compare $\Sat(\A)$ to the lattice of non-crossing partitions of $\ZZ\sqcup\{-\infty\}$. Our main result is that they are isomorphic lattices in such a way that $\Sat(\A_f)$ is identified with non-crossing partitions of $\ZZ$. As a consequence we see that thick subcategories of $\sfD^\mathrm{b}(\gr R)$ are in bijection with non-crossing partitions of $\mathbb{Z}\sqcup \{-\infty\}$.

\begin{defn}
	Let $(\mathcal{Z}, < ) $ be a partially ordered set. A non-crossing partition of $\mathcal{Z}$ is a partition
		\[
			\mathcal{Z} = \bigsqcup_{i \in I} B_i
		\]
	such that if we have $a,b \in B_i$ and $c,d \in B_j$ for $i \neq j \in I$, then neither $a < c < b < d$ nor $c < a < d < b$ occurs in $\mathcal{Z}$.
	
	We denote the collection of non-crossing partitions of $\mathcal{Z}$ by $\NC(\mathcal{Z})$.
\end{defn}

The set of non-crossing partitions $\NC(\mathcal{Z})$ forms a poset with ordering given by reverse refinement. Thus the bottom and top elements are
\begin{displaymath}
\mathcal{Z} = \bigsqcup_{z\in \mathcal{Z}} \{z\} \;\; \text{and} \;\; \mathcal{Z} = \mathcal{Z}
\end{displaymath}
respectively, i.e.\ the finest and coarsest partitions. One can take the meet of two non-crossing partitions $B = \{B_i\;\vert\; i\in I\}$ and $C = \{C_j\;\vert\;j\in J\}$ in the usual way
\begin{displaymath}
B\wedge C = \{B_i\cap C_j \; \vert \; (i,j) \in I\times J\},
\end{displaymath}
it being evident that $B\wedge C$ is still non-crossing. Thus $\NC(\mathcal{Z})$ is a complete lattice. We note that it is not a sublattice of the lattice of partitions of $\mathcal{Z}$ as, in general, one has to take the least non-crossing refined by the usual join of partitions. We can regard $\NC(\ZZ)$ as  embedded in $\NC(\ZZ \cup \{-\infty\})$ as the non-crossing partitions where the singleton $\{-\infty\}$ is a block. In this way $\NC(\ZZ)$ would be a sublattice except that it fails to contain the maximal element. 

We now give assignments relating $\NC(\ZZ\sqcup\{-\infty\})$ and $\Sat(\A)$ which, as in the last section, we will show are well defined after describing them. For the sake of brevity we will set $\mathcal{Z} = \ZZ\sqcup\{-\infty\}$ in what follows.

Given a non-crossing partition $B=\{B_i\;\vert\;i\in I\}$ we define a collection of arcs
\begin{displaymath}
\alpha(B) = \bigcup_{i\in I}\{(a,b)\in \A \; \vert \; a,b,\in B_i\}.
\end{displaymath}
Given a saturated set of arcs $\mcS$ we first define, for each $a\in \mathcal{Z}$, a subset of $\mathcal{Z}$
\begin{displaymath}
\phi(\mcS)(a) = \{a\} \sqcup \{b\in \mathcal{Z} \; \vert \; (a,b)\in \mcS \; \text{or} \; (b,a)\in \mcS\}.
\end{displaymath}
We then associate to $\mcS$ the element of the powerset of $\mathcal{Z}$
\begin{displaymath}
\phi(\mcS) = \{\phi(\mcS)(a)\; \vert\; a\in \mathcal{Z}\},
\end{displaymath}
where we, as usual, identify redundant copies on the right-hand side. One could, of course, instead pick a set of representatives for the subsets on the right.

\begin{lem}\label{lem_alpha}
The assignment $\alpha$ defines a map of lattices
\begin{displaymath}
\alpha\colon \NC(\mathcal{Z}) \to \Sat(\A)
\end{displaymath}
which sends $\NC(\ZZ)$ to $\Sat(\A_f)$.
\end{lem}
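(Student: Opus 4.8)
The plan is to verify three things in turn: that $\alpha(B)$ is always a saturated set of arcs (and a set of \emph{finite} arcs when $B$ comes from $\NC(\ZZ)$), that $\alpha$ is order-preserving and preserves arbitrary meets, and that it preserves arbitrary joins.

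First I would check well-definedness. Fix $B=\{B_i\mid i\in I\}$ and suppose $(a,b),(c,d)\in\alpha(B)$ touch, say with $a\le c\le b\le d$. Each arc has both of its endpoints in a single, necessarily unique, block; say $a,b\in B_i$ and $c,d\in B_j$. The key observation is that $i=j$: otherwise $B_i\cap B_j=\varnothing$, so $a,b,c,d$ are pairwise distinct, and then $a\le c\le b\le d$ sharpens to $a<c<b<d$, which is precisely a crossing of $B_i$ with $B_j$, contradicting that $B$ is non-crossing. Hence $a,b,c,d$ all lie in $B_i$, so each of $(a,c),(c,b),(b,d),(a,d)$ that is an arc has both endpoints in $B_i$ and therefore lies in $\alpha(B)$; this is exactly the saturation condition. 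If in addition $\{-\infty\}$ is a block of $B$ (the case $B\in\NC(\ZZ)$), then every block with at least two elements is disjoint from $\{-\infty\}$ and so contained in $\ZZ$, whence every arc of $\alpha(B)$ has finite length and $\alpha(B)\in\Sat(\A_f)$.

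Monotonicity and meet-preservation I would then dispatch quickly. If $B$ refines $C$ then each block of $B$ lies in a block of $C$, so $\alpha(B)\subseteq\alpha(C)$. Since $\bigwedge_k B^{(k)}$ is the common refinement, whose blocks are the nonempty sets $\bigcap_k B^{(k)}_{i_k}$, a pair $\{a,b\}$ lies in a single block of it iff $a$ and $b$ share a block in every $B^{(k)}$; this says exactly that $\alpha(\bigwedge_k B^{(k)})=\bigcap_k\alpha(B^{(k)})$, and intersection is the meet in $\Sat(\A)$.

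Join-preservation is where the work lies, and I expect it to be the main obstacle. One inclusion is free: $\bigvee_k\alpha(B^{(k)})=\sat\big(\bigcup_k\alpha(B^{(k)})\big)\subseteq\alpha\big(\bigvee_k B^{(k)}\big)$ because the right-hand side is saturated (by the first step) and contains every $\alpha(B^{(k)})$ (by monotonicity). For the reverse one cannot simply chase overlapping blocks, since the non-crossing join is \emph{not} the ordinary partition join. My plan is to produce the inverse correspondence $\phi$ from the paragraph preceding the lemma: for saturated $\mcS$ declare $a\approx_\mcS b$ when $a=b$ or the arc joining $a$ and $b$ lies in $\mcS$. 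What must be shown is that $\approx_\mcS$ is an \emph{equivalence relation} — reflexivity and symmetry are immediate, and transitivity is an instance of the saturation axiom once one notes that two arcs sharing an endpoint touch in the configuration required there and saturation then supplies the remaining arc — and that its classes form a \emph{non-crossing} partition $\phi(\mcS)$, since a crossing between two classes would, via a touching pair of arcs and saturation, force an arc linking the two classes and so collapse them. Granting this, $\alpha\phi=\id$, $\phi\alpha=\id$, and monotonicity of $\phi$ all fall out by inspection, and join-preservation becomes formal: if a saturated $\mcS$ contains every $\alpha(B^{(k)})$, then $\phi(\mcS)\ge\phi\alpha(B^{(k)})=B^{(k)}$ for all $k$, so $\phi(\mcS)\ge\bigvee_k B^{(k)}$, and therefore $\mcS=\alpha\phi(\mcS)\supseteq\alpha\big(\bigvee_k B^{(k)}\big)$; applying this with $\mcS=\bigvee_k\alpha(B^{(k)})$ closes the remaining inclusion. (Alternatively one could avoid $\phi$ and argue directly from the standard iterative construction of $\bigvee_{\NC}$ out of the partition join, together with repeated gluing of arcs inside a saturated set, but this is more laborious.) So the single genuine difficulty is the structural fact that saturation of a set of arcs simultaneously encodes transitivity of arc-adjacency and non-crossingness of its equivalence classes; everything else is bookkeeping.
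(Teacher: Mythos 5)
Your argument for the well-definedness of $\alpha$ (saturation of $\alpha(B)$ via non-crossingness) and for the image of $\NC(\ZZ)$ is exactly the paper's argument. Where you diverge is in the treatment of the lattice-map property: the paper dispatches this with ``Clearly $\alpha$ is a poset map and one sees easily that it preserves meets and joins,'' reserving the construction of $\phi$ for the following lemma and the verification that $\alpha$ and $\phi$ are mutually inverse for the theorem that comes after both. You, by contrast, correctly flag that join preservation for $\alpha$ alone is not a triviality — the non-crossing join is coarser than the ordinary partition join, and the saturation closure is likewise not a simple union — and you handle it by pulling forward the definition of $\phi$, proving it lands in $\NC(\mathcal{Z})$ (transitivity of $\approx_\mcS$ and non-crossingness of the classes both coming out of the saturation axiom, as you note), and then deducing join preservation formally from the fact that $\alpha$ and $\phi$ form a monotone bijection. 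This is sound; a poset isomorphism between complete lattices automatically preserves all meets and joins. The net effect is that your proof of this one lemma absorbs the content of the paper's Lemma~\ref{lem_phi} and the isomorphism theorem that follows, rather than leaving them for later; that reorganisation makes the join argument honest at the cost of some front-loading, and your aside that one could instead chase the iterative construction of $\bigvee_{\NC}$ is also a viable (if more laborious) route closer in spirit to what a standalone proof of this lemma would have to do. One small point worth keeping in mind when you write this up properly: the claimed transitivity of $\approx_\mcS$ does need a short case analysis on the relative order of the three points, but every case reduces to a single application of the saturation condition, exactly as you indicate.
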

\begin{proof}
	Let
	\[
			B = \{B_i\;\vert\; i\in I\}
	\]
	be a non-crossing partition of $\mathcal{Z}$ and
	\[
		\alpha(B) = \bigcup_{i \in I}\{(a,b) \in \A \mid a,b \in B_i\}
	\]
	as above. We first show that the set of arcs $\alpha(B)$ is saturated. Suppose then that $(a,b)$ and  $(c,d)$ lie in $\alpha(B)$ with $a \leq c \leq b \leq d$. By construction we have $a,b \in B_i$ and $c,d \in B_j$ for some $i,j \in I$. The collection $\alpha(B)$ will certainly contain whichever of $(a,c), (a,d), (b,c)$, and $(b,d)$ are arcs if $i = j$.
	
	If we have $i\neq j$, then since $B$ is a non-crossing partition, we cannot have $a < c < b < d$. Thus one of 
	$a = c$, $c = b$ or $b = d$ must hold. It would follow that $B_i \cap B_j \neq \varnothing$ contradicting the fact that $B$ is a partition. Therefore we must have $i = j$ and so $\alpha(B)$ contains the required arcs as indicated above.
	
Clearly $\alpha$ is a poset map and one sees easily that it preserves meets and joins.
	
	It is immediate from the definition of $\alpha$ that if $B\in \NC(\ZZ)$, under the identification of $\NC(\ZZ)$ with the partitions containing the singleton block $\{-\infty\}$, then $\alpha(B)\in \Sat(\A_f)$.
\end{proof}

\begin{lem}\label{lem_phi}
The assignment $\phi$ defines a map of lattices
\begin{displaymath}
\phi\colon \Sat(\A) \to \NC(\mathcal{Z})
\end{displaymath}
which sends $\Sat(\A_f)$ to $\NC(\ZZ)$.
\end{lem}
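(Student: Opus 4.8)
The plan is to verify two things: first, that for every saturated set of arcs $\mcS$ the collection $\phi(\mcS)$ is genuinely a \emph{partition} of $\mathcal{Z}$ and is non-crossing; second, that $\phi$ is a poset map preserving meets and joins and that it sends $\Sat(\A_f)$ into $\NC(\ZZ)$. The heart of the argument is the first point, and within it the fact that the sets $\phi(\mcS)(a)$ are pairwise equal or disjoint. This is where saturation does all the work: if $b$ lies in both $\phi(\mcS)(a)$ and $\phi(\mcS)(a')$, say via arcs $(a,b), (a',b)\in \mcS$ (the other sign combinations being handled symmetrically, replacing an arc $(a,b)$ by the condition $a<b$ or swapping roles), then these two arcs touch at the common endpoint $b$, so after sorting $a,a'$ we are in one of the five touching configurations and saturation forces the arc between $a$ and $a'$ (if $a\neq a'$) into $\mcS$, hence $a'\in\phi(\mcS)(a)$ and more generally $\phi(\mcS)(a)=\phi(\mcS)(a')$. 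The cleanest way to run this is to prove the relation ``$a\sim a'$ iff $a=a'$ or $(a,a')\in\mcS$ or $(a',a)\in\mcS$'' is an equivalence relation: reflexivity and symmetry are built in, and transitivity is exactly the touching-plus-saturation argument just described, with a short case analysis on the relative order of the three points. The equivalence classes are then the blocks $\phi(\mcS)(a)$, so $\phi(\mcS)$ is a partition.

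Next I would check the non-crossing condition. Suppose $a,b$ lie in one block and $c,d$ in another with $a<c<b<d$ in $\mathcal{Z}$. If $a\neq b$ then $(a,b)\in\mcS$ (and if $a=b$ the crossing inequality $a<c<b$ is impossible, so we may assume $a<b$, similarly $c<d$), and likewise $(c,d)\in\mcS$. But $a<c<b<d$ says precisely that $(a,b)$ and $(c,d)$ \emph{cross}, which in particular means they touch with $a\le c\le b\le d$; saturation then puts $(a,c)$ into $\mcS$, so $c\in\phi(\mcS)(a)$, i.e.\ $c$ lies in the same block as $a$ and $b$, contradicting the assumption that its block is different. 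The case $c<a<d<b$ is symmetric. Hence $\phi(\mcS)$ is non-crossing.

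The remaining points are routine. For monotonicity: if $\mcS\subseteq\mcS'$ then every arc witnessing $b\in\phi(\mcS)(a)$ also lies in $\mcS'$, so each block of $\phi(\mcS)$ is contained in a block of $\phi(\mcS')$, i.e.\ $\phi(\mcS)\le\phi(\mcS')$ in the reverse-refinement order. Preservation of meets follows because an arc $(a,b)$ lies in $\mcS\cap\mcS'$ iff it lies in both, so $\phi(\mcS\cap\mcS')(a)=\phi(\mcS)(a)\cap\phi(\mcS')(a)$ for every $a$, which is exactly the block of $a$ in the meet $\phi(\mcS)\wedge\phi(\mcS')$; preservation of joins is then automatic for a meet-preserving map between complete lattices, or can be seen directly once Lemma~\ref{lem_alpha} is in hand by noting $\phi$ and $\alpha$ are mutually inverse (which is the content of the next result and need not be invoked here). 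Finally, if $\mcS\subseteq\A_f$ then no arc of $\mcS$ has $-\infty$ as an endpoint, so $\phi(\mcS)(-\infty)=\{-\infty\}$; under the identification of $\NC(\ZZ)$ with partitions of $\mathcal{Z}$ having $\{-\infty\}$ as a singleton block this says $\phi(\mcS)\in\NC(\ZZ)$, as required.

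I expect the only real obstacle to be the transitivity step in the partition argument: one has to be careful that the common endpoint can occur in several positions (it may be the larger or the smaller endpoint of either arc), so a couple of sub-cases need to be checked, each time invoking the appropriate instance of the saturation closure on one of $(a,c)$, $(c,b)$, $(b,d)$, $(a,d)$. Everything else is bookkeeping with the lexicographic order and the definitions.
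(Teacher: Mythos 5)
Your proposal is essentially correct and follows the paper's own proof closely: the paper verifies that $\phi(\mcS)$ is a partition via the claim ``$b \in B(a)$ if and only if $B(a) = B(b)$,'' which has the same content as your equivalence-relation formulation (your transitivity step is the paper's claim applied twice), and your non-crossing argument and the $\Sat(\A_f) \to \NC(\ZZ)$ observation are also the same as the paper's.

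One assertion in your write-up is, however, false: ``preservation of joins is then automatic for a meet-preserving map between complete lattices.'' A monotone map between complete lattices that preserves arbitrary meets has a left adjoint, but it need not preserve joins; for instance, a monotone, right-continuous but not left-continuous self-map of $[0,1]$ preserves all infima but not all suprema. The clean route to join-preservation is the one you set aside: once one knows (from the theorem that follows) that $\phi$ and $\alpha$ are mutually inverse monotone bijections, each automatically preserves all existing meets and joins. The paper itself only claims the lattice-map property is a ``straightforward verification'' and does not spell it out; since the lemma is invoked downstream merely to supply well-defined monotone maps feeding into that theorem, the slip does not compromise the overall argument, but the stated general principle should be removed or replaced by an explicit check of joins (e.g.\ by comparing $\alpha$ of the join with the saturation of the union) or by deferring to the inverse-bijection fact.
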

\begin{proof}
Let $\mcS$ be a saturated set of arcs and $\phi(\mcS)$ the collection of subsets of $\mathcal{Z}$ defined above. For every $a\in \mathcal{Z}$ we have $a\in \phi(\mcS)(a)$ so the $\phi(\mcS)(a)$ certainly cover $\mathcal{Z}$. Let us verify it is a partition. To slightly ease the notational load let us denote the partition $\phi(\mcS)$ simply by $B$.

	We claim that $b \in B(a)$ if and only if $B(a) = B(b)$. It is clear that $B(a) = B(b)$ implies $b \in B(a)$. So assume that $b \in B(a)$. If $a = b$, for instance if $B(a)$ is a singleton, then the statement is trivially true. If they are distinct then either $(a,b)$ or $(b,a)$ lies in $\mathcal{S}$. Let $c \in B(a)$, $d \in B(b)$. Then those of $(d,b), (b,d), (a,c)$ and $(c,a)$ which are arcs lie in $\mathcal{S}$ and since $\mathcal{S}$ is saturated, those of $(d,a), (a,d), (b,c)$ and $(c,b)$ which are arcs also lie in $\mathcal{S}$. It follows that $d \in B(a)$ and $c \in B(b)$ and therefore $B(a) = B(b)$.
	
Thus $\phi(\mcS)$ is a partition as it follows that distinct blocks have empty intersection. It remains to show that it is non-crossing. To this end suppose we are given $a,b \in B(a)$ and $c,d \in B(c)$ with $a < c < b < d$. Our assumption tells us that $(a,b) \in \mathcal{S}$ and $(c,d) \in \mathcal{S}$ and $(a,b)$ and $(c,d)$ cross. Since $\mathcal{S}$ is saturated, the arcs $(a,c), (c,b), (b,d)$ and $(a,d)$ also lie in $\mathcal{S}$. Hence $c\in B(a)$ and, using again the fact we proved above, we must have $B(a) = B(c)$.

If $\mcS\in \Sat(\A_f)$ then it contains no arc starting at $\{-\infty\}$ and so gives a non-crossing partition of $\NC(\ZZ)$ under our fixed embedding in $\NC(\mathcal{Z})$.

Finally, it's again evident that $\phi$ is a map of posets and a straightforward verification to see it respects the lattice structures.
\end{proof}

\begin{thm}
The maps $\alpha$ and $\phi$ are inverse isomorphisms of lattices $\NC(\mathcal{Z}) \cong \Sat(\A)$ which restrict to give an isomorphism $\NC(\ZZ)\cong \Sat(\A_f)$.
\end{thm}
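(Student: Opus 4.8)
The plan is to verify directly that $\alpha$ and $\phi$ are mutually inverse, using Lemmas~\ref{lem_alpha} and \ref{lem_phi} which already establish that both are well-defined lattice maps and that they respect the embeddings of $\NC(\ZZ)$ and $\Sat(\A_f)$. Since both maps are order-preserving and the restriction claim is automatic once we know they are inverse bijections on the big lattices, it suffices to chase the two composites.

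First I would check $\phi\circ\alpha = \id$. Fix a non-crossing partition $B=\{B_i\;\vert\;i\in I\}$ and let $a\in\mathcal{Z}$ lie in the block $B_i$. By definition $\phi(\alpha(B))(a) = \{a\}\sqcup\{b\;\vert\;(a,b)\in\alpha(B)\text{ or }(b,a)\in\alpha(B)\}$, and by the definition of $\alpha$ an arc with an endpoint $a$ lies in $\alpha(B)$ precisely when its other endpoint is also in $B_i$; hence $\phi(\alpha(B))(a)=B_i$. Thus the partition recovered is exactly $B$ (one only needs that every $B_i$ is non-empty so it contains some $a$, which is true of a partition), giving $\phi\alpha(B)=B$.

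Next I would check $\alpha\circ\phi = \id$. Let $\mcS$ be a saturated set of arcs and write $B=\phi(\mcS)$ for the associated non-crossing partition, with blocks $B(a)$ as in Lemma~\ref{lem_phi}. I must show $\alpha(B)=\mcS$. For the inclusion $\alpha(B)\subseteq\mcS$: if $(a,b)\in\alpha(B)$ then $a,b$ lie in a common block $B(a)=B(b)$, so by definition of $\phi$ there is some arc with endpoints $a$ and $b$ in $\mcS$, namely $(a,b)$ itself (since $a<b$), so $(a,b)\in\mcS$. Conversely, if $(a,b)\in\mcS$ then $b\in\phi(\mcS)(a)$, so $a$ and $b$ lie in the same block of $B$, whence $(a,b)\in\alpha(B)$. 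This proves $\alpha(B)=\mcS$.

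Having shown $\alpha$ and $\phi$ are inverse bijections, and knowing from the two lemmas that each is a map of posets, they are automatically lattice isomorphisms. The restriction to $\NC(\ZZ)\cong\Sat(\A_f)$ is then immediate: Lemma~\ref{lem_alpha} sends $\NC(\ZZ)$ into $\Sat(\A_f)$ and Lemma~\ref{lem_phi} sends $\Sat(\A_f)$ into $\NC(\ZZ)$, and these are the restrictions of mutually inverse maps, hence mutually inverse on the sublattices. I do not expect any genuine obstacle here; the only thing requiring a moment's care is making sure, in $\phi\alpha=\id$, that an arc in $\alpha(B)$ with endpoint $a$ has its other endpoint in the \emph{same} block as $a$ rather than merely in some block, which is built into the definition of $\alpha$ as a disjoint union over blocks together with $B$ being a genuine partition.
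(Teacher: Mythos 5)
Your proof is correct and follows the same approach as the paper: a direct verification that the two composites $\phi\circ\alpha$ and $\alpha\circ\phi$ are the identity, with the restriction to $\NC(\ZZ)\cong\Sat(\A_f)$ following from Lemmas~\ref{lem_alpha} and \ref{lem_phi}. The paper's computation is perhaps a touch terser but contains no ideas beyond what you have written.
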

\begin{proof}
By the preceding two lemmas we know $\alpha$ and $\phi$ are well defined morphisms of lattices and we just need to check they are inverse. So suppose we are given a non-crossing partition $B = \{B_i\;\vert\;i\in I\}$ and a saturated set of arcs $\mcS$. Then for $a\in B_j$ the partition $\phi\alpha(B)$ has a corresponding block
\begin{align*}
\phi\alpha(B)(a) &=  \{a\} \sqcup \{b\in \mathcal{Z} \; \vert \; (a,b) \; \text{or} \; (b,a) \; \text{lies in}\; \alpha(B)\} \\
&= \{a\} \sqcup \{b\in \mathcal{Z} \; \vert \; (a,b) \; \text{or} \; (b,a) \; \text{lies in}\; \bigcup_{i \in I}\{(a,b) \in \A \mid a,b \in B_i\}\} \\
&= B_j
\end{align*}
so $\phi\alpha(B) = B$. On the other hand, 
\begin{align*}
\alpha\phi(\mcS) &= \bigcup_{c\in \mathcal{Z}}\{(a,b) \in \A \mid a,b\in \phi(\mcS)(c)\} \\
&= \bigcup_{c\in \mathcal{Z}}\{(a,b) \in \A \mid a,b\in \{c\} \sqcup \{d\in \mathcal{Z} \; \vert \; (c,d) \; \text{or} \; (d,c) \; \text{lies in}\; \mcS\}\} \\
&= \mcS
\end{align*}
and so $\alpha$ and $\phi$ are inverse as claimed. 
\end{proof}

Summarising what we have shown yields the following corollary.

\begin{cor}\label{cor_summary}
There is a commutative diagram
\begin{displaymath}
\xymatrix{
\NC(\ZZ\sqcup\{-\infty\}) \ar[r]^-\sim & \Sat(\A) \ar[r]^-\sim & \Thick(\sfD^\mathrm{b}(\gr S)) \ar[r]^-\sim & \Thick(\sfD^\mathrm{b}(\gr R)) \\
\NC(\ZZ) \ar[u] \ar[r]_-\sim & \Sat(\A_f) \ar[u] \ar[r]_-\sim & \Thick(\sfD^\mathrm{b}_\mathrm{tors}(\gr S)) \ar[u] \ar[r]_-\sim & \Thick(\sfD^\mathrm{perf}(\gr R)) \ar[u] \\
}
\end{displaymath}
where the horizontal maps are lattice isomorphisms and the vertical maps are inclusions of posets preserving arbitrary meets and joins.
\end{cor}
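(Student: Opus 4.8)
The plan is simply to concatenate the three lattice isomorphisms already produced in this section and check compatibility with the relevant inclusions; no genuinely new argument is needed.

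First I would assemble the top row as the composite of three maps: the isomorphism $\alpha\colon \NC(\mathcal{Z})\to\Sat(\A)$ of the theorem immediately preceding this corollary; the isomorphism $\tau\colon\Sat(\A)\to\Thick(\sfD^\mathrm{b}(\gr S))$ of Theorem~\ref{thm_saturatedclassification}; and the isomorphism $\Thick(\sfD^\mathrm{b}(\gr S))\to\Thick(\sfD^\mathrm{b}(\gr R))$ induced by the triangle equivalence $\phi^{-1}$ of Theorem~\ref{thm_bgg}. Here I would use that an equivalence of triangulated categories carries thick subcategories to thick subcategories and hence induces an isomorphism between the lattices $\Thick(-)$; since meets are intersections and joins are generated thick subcategories, this isomorphism preserves arbitrary meets and joins, as do $\alpha$ and $\tau$ by Lemmas~\ref{lem_alpha} and~\ref{lem_phi} and Theorem~\ref{thm_saturatedclassification}. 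Thus the composite is a meet- and join-preserving lattice isomorphism. The bottom row is built the same way, now invoking the restricted statements: $\alpha$ restricts to $\NC(\ZZ)\to\Sat(\A_f)$, $\tau$ restricts to $\Sat(\A_f)\to\Thick(\sfD^\mathrm{b}_\mathrm{tors}(\gr S))$, and $\phi^{-1}$ restricts to an equivalence $\sfD^\mathrm{b}_\mathrm{tors}(\gr S)\to\sfD^\mathrm{perf}(\gr R)$ by Theorem~\ref{thm_bgg}, giving the third isomorphism.

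Finally I would identify the vertical maps with the evident inclusions and verify the squares commute. The inclusions $\NC(\ZZ)\hookrightarrow\NC(\mathcal{Z})$ and $\Sat(\A_f)\hookrightarrow\Sat(\A)$ preserve arbitrary meets and joins by the remarks made when these lattices were introduced\textemdash{}each is closed under all meets and joins of the ambient lattice and merely fails to contain the top element\textemdash{}and the inclusions of thick subcategories of $\sfD^\mathrm{b}_\mathrm{tors}(\gr S)$ and of $\sfD^\mathrm{perf}(\gr R)$ into those of the ambient triangulated categories preserve arbitrary meets and joins since meets are intersections and joins are generated thick subcategories, which stay inside the thick subcategory $\sfD^\mathrm{b}_\mathrm{tors}(\gr S)$ (resp.\ $\sfD^\mathrm{perf}(\gr R)$). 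Commutativity of each of the three squares is then immediate, because in every case the bottom horizontal arrow is by construction the restriction of the top one. The only place calling for any care is this compatibility check, and it is pure bookkeeping: all the substance was already established in Theorems~\ref{thm_bgg} and~\ref{thm_saturatedclassification}, in Lemmas~\ref{lem_alpha} and~\ref{lem_phi}, and in the theorem preceding this corollary, so I do not anticipate any real obstacle.
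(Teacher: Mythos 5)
Your proof is correct and matches the paper's own treatment: the paper simply remarks that the corollary is obtained by ``summarising what we have shown,'' which is exactly the concatenation of the preceding theorem ($\alpha$, $\phi$), Theorem~\ref{thm_saturatedclassification} ($\tau$, $\sigma$), and the BGG equivalence of Theorem~\ref{thm_bgg}, together with the compatibility of the restricted versions with the evident inclusions. Your care in noting that the vertical inclusions preserve all meets and joins (while not preserving the top element) matches the paper's earlier remark to the same effect.
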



\subsection{Actions of autoequivalences}

To round out the discussion we now describe the action of some important autoequivalences of $\sfD^\mathrm{b}(\gr S)$ on the geometric model and the lattice of thick subcategories. It turns out that very natural operations on arcs, namely translations and reflections, correspond to similarly natural autoequivalences.

As noted in the preliminaries there is a grading shift, or twist, $(i)$ on graded modules for each $i\in \ZZ$. This extends canonically to an autoequivalence of $\sfD^\mathrm{b}(\gr S)$ which preserves the full subcategory $\sfD^\mathrm{b}_\mathrm{tors}(\gr S)$.

\begin{prop}\label{prop_translate}
The action of $(i)$ on $\sfD^\mathrm{b}(\gr S)$ corresponds to translation by $i$ on $\A$ i.e.\ it acts by
\begin{displaymath}
\begin{array}{lrlr}
& (a,b) &\mapsto &(a+1, b+1) \\
\text{and} & (-\infty, b) &\mapsto &(-\infty, b+1)
\end{array}
\end{displaymath}
The only thick subcategories stable under $(1)$ are 
\begin{displaymath}
0 \subsetneq \sfD^\mathrm{b}_\mathrm{tors}(\gr S) \subsetneq \sfD^\mathrm{b}(\gr S).
\end{displaymath}
\end{prop}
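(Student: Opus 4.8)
First I would verify the easy half: the claimed action of $(i)$ on arcs. Under the bijection $\varphi$, the module $S/(y^k)(l)$ corresponds to $(l-k,l)$ and $S(l)$ to $(-\infty,l)$; since $(S/(y^k)(l))(1) = S/(y^k)(l+1)$ and $S(l)(1) = S(l+1)$, the twist sends $(l-k,l)\mapsto(l-k+1,l+1)$ and $(-\infty,l)\mapsto(-\infty,l+1)$, which is precisely translation by $1$. Iterating gives the statement for all $i$, and $(i)$ visibly preserves finite length, hence restricts to $\sfD^\mathrm{b}_\mathrm{tors}(\gr S)$. Passing through the isomorphism of Theorem~\ref{thm_saturatedclassification}, a thick subcategory is stable under $(1)$ if and only if the corresponding saturated set $\mcS\in\Sat(\A)$ is stable under translation by $1$.

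So the real content is: the only translation-invariant saturated sets of arcs are $\varnothing$, $\A_f$, and $\A$. I would argue as follows. Let $\mcS$ be a nonempty translation-invariant saturated set. If $\mcS$ contains a finite arc $(a,b)$, then by invariance it contains $(a,b+1)$ (translate doesn't directly give this, so instead): it contains all translates $(a+n,b+n)$; now $(a,b)$ and $(a+1,b+1)$ touch (with $a\le a+1\le b$ provided $b-a\ge 1$, which always holds), so saturation forces the arc $(a,b+1)$ and $(a+1,b)$ into $\mcS$ whenever $b-a\ge 2$, and more importantly the arc $(a,a+1)$ and so on. The cleanest route is to show $\mcS$ contains some length-one arc: from any $(a,b)\in\mcS\cap\A_f$, repeatedly intersecting/overlapping with its translates and saturating, one produces $(a,a+1)$. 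Then by translation $\mcS$ contains every length-one arc, and by iterated saturation (gluing $(a,a+1)$ with $(a+1,a+2)$ gives $(a,a+2)$, etc.) $\mcS\supseteq\A_f$. If moreover $\mcS$ contains an infinite arc $(-\infty,b)$, then gluing with the finite arcs already present (an infinite arc touches $(b,d)$ and $(-\infty,c)$ appropriately, forcing $(-\infty,d)$ for all $d$) yields $\mcS=\A$. If $\mcS$ contains no finite arc, it consists only of infinite arcs; but any two infinite arcs $(-\infty,b),(-\infty,d)$ with $b<d$ touch with $-\infty\le -\infty\le b\le d$, so saturation forces the finite arc $(b,d)\in\mcS$, a contradiction unless $\mcS$ has at most one element — and a single arc is not translation-invariant. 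Hence the only cases are $\mcS=\A_f$ and $\mcS=\A$, giving the three subcategories listed.

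\textbf{Main obstacle.} The only delicate point is the bootstrapping step: showing that a nonempty translation-invariant saturated $\mcS$ containing a finite arc must contain a \emph{length-one} arc (equivalently, a simple module $\base(j)$). The subtlety is that saturation of $(a,b)$ with its translate $(a+1,b+1)$ only produces arcs among $\{(a,a+1),(a+1,b),(a,b+1),(b,b+1)\}$, so one needs to track carefully which overlaps are legal and induct on the length $b-a$; alternatively one observes that repeatedly taking ``$(a,c)$ where $(a,b),(a+k,b+k)$ overlap'' strictly decreases a length parameter until a simple appears. Everything after that — gluing simples to get all of $\A_f$, and absorbing one infinite arc to get all of $\A$ — is routine saturation bookkeeping using only the touching conditions of the definition of saturated set.
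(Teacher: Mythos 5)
Your argument is correct and takes essentially the same route as the paper: read off the action on arcs from the bijection, transfer the question to translation-invariant saturated sets via Theorem~\ref{thm_saturatedclassification}, and show these can only be $\varnothing$, $\A_f$, or $\A$. The ``main obstacle'' you worry about is not actually one: applying the definition of saturation to $(a,b)$ and its unit translate $(a+1,b+1)$, which touch with $a\le a+1\le b\le b+1$, immediately puts the length-one arc $(a,a+1)$ into $\mcS$ in a single step, with no induction on length needed --- this is precisely the one-line argument the paper uses.
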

\begin{proof}
One easily sees that the action is as stated using the bijection between arcs and indecomposable objects. The second statement follows from Theorem~\ref{thm_saturatedclassification}. Suppose $\mcS$ is a non-empty saturated subset of arcs closed under translations. For any finite $(a,b)\in \mcS$ we obtain $(a+1, b+1)\in \mcS$ and hence $(a,a+1)\in \mcS$. Thus $\A_f \subseteq \mcS$ as soon as $\mcS$ contains a finite arc.

If $\mcS$ contains an infinite arc $(-\infty, b)$ then, in a similar manner we get $(b,b+1) \in \mcS$. Thus there are three possibilities for a translation closed saturated subset: the empty set $\varnothing$, the finite arcs $\A_f$, or all of $\A$.
\end{proof}

\begin{rem}
Closure under $(i)$ corresponds to being a thick tensor ideal of $\sfD^\mathrm{b}(\gr S)$ and so the above gives the expected answer according to the computation performed in \cite{DSgraded}. The three stable thick subcategories correspond to the three closed subsets of the homogeneous spectrum, $\{0, (y)\}$, of $S$.
\end{rem}

One can also consider reflections on $\A$ which give rise to anti-involutions on $\Sat(\A)$. Explicitly, if we fix $j\in \ZZ$ there is a reflection about $j$, denoted $s_j$, defined for $a,b\in \ZZ$ by
\begin{displaymath}
s_j(a,b) = (j-b, j-a) \;\; \text{and} \;\; s_j(-\infty, b) = (-\infty, j-b).
\end{displaymath}
One checks easily that arbitrary reflections can be expressed by the reflection $s_0$ about $0$ and translation via the relation $s_j = (j)\circ s_0$. These reflections correspond, up to suspensions, to Grothendieck duality on $\sfD^\mathrm{b}(\gr S)$. Let $\RuHom(-,-)$ denote the right derived graded hom-functor on $\sfD^\mathrm{b}(\gr S)$.

\begin{prop}\label{prop_reflect}
Under the bijection between suspension orbits of indecomposable objects of $\sfD^\mathrm{b}(\gr S)$ and arcs $\A$ the duality $\RuHom(-, S(j))$ corresponds to $s_j$. Thus the Grothendieck duality functor $\RuHom(-,S(j))$ acts on $\Thick(\sfD^\mathrm{b}(\gr S))\cong \Sat(\A)$ via reflection about $j$. In particular the thick subcategories stable under this duality are precisely those which correspond to saturated sets of arcs which are symmetric about $j$.
\end{prop}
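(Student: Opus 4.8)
The plan is to compute the Grothendieck duality functor $D := \RuHom(-, S(j))$ explicitly on the indecomposable objects $M_{(a,b)}$, observe that on suspension orbits it realises the reflection $s_j$, and then transport this along the lattice isomorphism of Theorem~\ref{thm_saturatedclassification}. First I would record the standing facts about $D$: since $S$ is hereditary every object of $\sfD^\mathrm{b}(\gr S)$ is perfect, so $D$ preserves $\sfD^\mathrm{b}(\gr S)$, is a contravariant autoequivalence with $D\circ D\cong \Ident$ via biduality, and hence permutes the suspension orbits of indecomposables and carries thick subcategories to thick subcategories. Using $\RHom(X, S(j)) \cong \RHom(X,S)(j)$ together with $\underline{\Hom}(S(m), N) \cong N(-m)$, the infinite arcs are handled at once: $M_{(-\infty,b)} = S(b)$ is projective, so $D(M_{(-\infty,b)}) \cong S(j-b) = M_{(-\infty,\,j-b)} = M_{s_j(-\infty,b)}$.

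For a finite arc $(a,b)$ I would feed the minimal projective resolution
\[
0 \to M_{(-\infty,a)} \xrightarrow{\;y^{b-a}\;} M_{(-\infty,b)} \to M_{(a,b)} \to 0,
\]
already used in the proof of Lemma~\ref{L:Exts}, into the contravariant functor $D$. This represents $D(M_{(a,b)})$ by the two-term complex of projectives $\bigl[\, M_{(-\infty,\,j-b)} \xrightarrow{\;y^{b-a}\;} M_{(-\infty,\,j-a)}\,\bigr]$ placed in cohomological degrees $0$ and $1$. Since $S$ is a domain the differential is injective, so $H^0 = 0$ and $H^1 \cong \coker\bigl(y^{b-a}\colon S(j-b)\to S(j-a)\bigr)$; a short degree count identifies this cokernel with $S/(y^{b-a})(j-a)$, which under $\varphi$ is exactly the arc $(j-b,\,j-a) = s_j(a,b)$. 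Hence $D(M_{(a,b)}) \cong \Sigma M_{s_j(a,b)}$, so on suspension orbits $D$ acts as $s_j$, proving the first assertion. (The factorisation $s_j = (j)\circ s_0$ together with Proposition~\ref{prop_translate} lets one reduce the bookkeeping to the case $j=0$ if desired.)

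For the statement about $\Thick(\sfD^\mathrm{b}(\gr S))$ I would use that the isomorphism $\sigma\colon \Thick(\sfD^\mathrm{b}(\gr S)) \xrightarrow{\sim} \Sat(\A)$ of Theorem~\ref{thm_saturatedclassification} remembers only which suspension orbits of indecomposables a thick subcategory contains. Since $D$ is an exact anti-equivalence, for any thick $\sfM$ one has
\[
\sigma(D\sfM) = \{(a,b)\in\A \mid M_{(a,b)}\in D\sfM\} = \{s_j(a,b) \mid (a,b)\in \sigma(\sfM)\};
\]
in particular $s_j(\mcS)$ is saturated whenever $\mcS$ is, and the action of $D$ transported to $\Sat(\A)$ is precisely reflection about $j$. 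Since $D\circ D\cong\Ident$, it follows that $D\sfM = \sfM$ if and only if $s_j\bigl(\sigma(\sfM)\bigr) = \sigma(\sfM)$, i.e.\ the $D$-stable thick subcategories are exactly those corresponding to saturated sets of arcs symmetric about $j$. The only point that needs real care is the identification of $H^1$ with the correctly twisted module $S/(y^{b-a})(j-a)$; everything else is formal manipulation of internal degree shifts and the self-duality of the notion of a thick subcategory, so that is where I would focus the verification.
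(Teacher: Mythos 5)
Your computation is correct and is essentially the paper's own argument: you dualise the minimal projective resolution $0 \to S(a) \xrightarrow{y^{b-a}} S(b) \to M_{(a,b)} \to 0$ against $S(j)$, identify the resulting cohomology with $S/(y^{b-a})(j-a)$, and read off the arc $s_j(a,b)$, which is exactly the direct calculation $\RuHom(S/(y^i)(l),S(j)) \cong \Sigma S/(y^i)(i+j-l)$ that the paper records (your indices $a=l-i$, $b=l$ match theirs). Your extra remarks on the infinite-arc case and on transporting the action through the lattice isomorphism $\sigma$ are the routine formal steps the paper leaves to the reader.
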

\begin{proof}
One computes directly that
\begin{align*}
\RuHom(S/(y^i)(l), S(j)) &\cong \RuHom(S/(y^i), S)(j-l) \\
&\cong \Sigma S/(y^i)(i+j-l)
\end{align*}
which corresponds to the operation 
\begin{displaymath}
(l-i, l) \mapsto (j-l, j+i-l) = s_j(l-i, l)
\end{displaymath}
on the arc labeling this module. The verification for the $S(l)$ is similar.
\end{proof}



\section{Some comments on localising subcategories}\label{sec_loc}

We now make a few, relatively brief, remarks on the situation for $\sfD(\Gr R)$ the unbounded derived category of all graded $R$-modules. As this section is intended partially as speculation and partially as motivation for others to pursue this problem we are a bit light on details and assume some level of expertise from the reader.

For the unbounded derived category one would like to understand the lattice of localising subcategories, i.e.\ the triangulated subcategories closed under arbitrary coproducts. We do not have a classification of the localising subcategories in this case; in fact, we do not even have a proof there are a set of localising subcategories rather than a proper class (although the strong expectation is that there is a set and that this classification problem is tractable). 

In this situation one can still use Koszul duality to express the problem as asking for a classification of localising subcategories of $\sfD_\mathrm{tors}(\Gr S)$, the unbounded derived category of torsion graded $S$-modules. It would of course be interesting to have a classification for the whole unbounded derived category $\sfD(\Gr S)$ of $S$ which would correspond to understanding $\sfK(\Inj R)$, the homotopy category of complexes of injective $R$-modules, on the exterior algebra side.

As an advertisement let us say that, besides being a very natural question, a classification of localising subcategories for $R$ would give a classification of localising subcategories for $\sfD(\Gr A[x]/(x^2))$ where $A$ is any commutative noetherian ring. This would follow from work of the second author and Antieau \cite{StevensonAntieau}. Presumably in this case one could, in a conceptually satisfying way, extend Corollary~\ref{cor_summary} to $\sfD^\mathrm{b}(\gr A[x]/(x^2))$ and one would expect a classification via poset maps from $\Spec A$ to $\NC(\ZZ\sqcup\{-\infty\})$.

\subsection{Preliminary remarks}

Our discussion is centred around contrasting this case with the situation for $\sfD(\Modu \base A_n)$, the unbounded derived category of a finite A-type Dynkin quiver. As noted in the introduction, $\sfD(\Gr R)$ and $\sfD(\Gr S)$ can be viewed as infinite generalisations of the derived categories of these finite Dynkin quivers.

For $A_n$ the classification of localising subcategories is the same as the classification of thick subcategories of the compact objects $\sfD^\mathrm{b}(\modu \base A_n)$. This is because $\sfD(\Modu \base A_n)$ is \emph{pure semisimple}: every object is a direct sum of indecomposable compact objects. The category $\sfD(\Gr R)$ is certainly not pure semisimple as $\base$ is indecomposable but not compact. However, it is at first glance conceivable that, nonetheless, every object is a direct sum of indecomposable objects. In the first section below we give an argument that this is not the case. This result is not surprising and probably known to experts, but we include an argument as it was not immediately obvious to us and we could not locate a reference. 

In the final section we show that, although not every object is a sum of indecomposable objects, the objects of $\sfD(\Gr R)$ tend to have many indecomposable summands. As, by \cite{ALPP}, the indecomposable objects of $\sfD(\Gr R)$ are precisely those in $\sfD^\mathrm{b}(\gr R)$ this provides some hope that one can deduce a classification by exploiting Corollary~\ref{cor_summary}. 

\subsection{There are not enough indecomposables}

We will show that not every object of $\sfD(\Gr R)$ is a direct sum of indecomposable objects. The argument is purely formal and the main fact we will use is the following lemma which is well known to the initiated.

\begin{lem}
Let $\sfA$ be a Grothendieck category such that every injective object of $\sfA$ is a direct sum of indecomposable objects. Then direct sums of injectives in $\sfA$ are injective. In particular, the category $\sfA$ is locally noetherian.
\end{lem}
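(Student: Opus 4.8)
The plan is to use the classical characterisation of locally noetherian Grothendieck categories due to Gabriel: a Grothendieck category $\sfA$ is locally noetherian if and only if every direct sum of injective objects is injective (equivalently, if and only if there is a generating set consisting of noetherian objects). So the real content is the first sentence, and the second will follow by invoking this theorem. I should be slightly careful: the statement as phrased bundles the hypothesis ``every injective is a direct sum of indecomposables'' together with the conclusion, so I will first prove that this hypothesis forces direct sums of injectives to be injective, and then cite Gabriel's theorem for the ``in particular''.

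First I would reduce the problem of showing a direct sum $\bigoplus_{i\in I} E_i$ of injectives is injective to the case of a countable, indeed an $\aleph_0$-indexed, direct sum; the standard device here is a theorem (again essentially Gabriel, or see Stenstr\"om) that in a Grothendieck category $\bigoplus_{i\in I}E_i$ is injective as soon as every countable subsum is, so one may assume $I=\NN$. Next I would take the injective hull $E$ of $\bigoplus_{n\in\NN}E_n$; by hypothesis $E$ decomposes as a direct sum of indecomposable (necessarily injective) objects, say $E=\bigoplus_{j\in J}I_j$. The key step is then a pigeonhole/finiteness argument: for each $n$, the summand $E_n$ (or better, a fixed generating subobject, or each of finitely many generators of $E_n$) lands inside a \emph{finite} subsum $\bigoplus_{j\in F_n}I_j$, because in a Grothendieck category any morphism from a ``small'' object into a direct sum factors through a finite subsum provided the source is appropriately finitely generated — here one has to be honest and instead argue via the exchange property of indecomposable injectives, or argue with essential extensions directly. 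The cleanest route is: $E$ is essential over $\bigoplus E_n$, each $I_j$ meets $\bigoplus E_n$ nontrivially, and then use that $I_j$ is uniform to pin each $I_j$ inside the injective hull of a single $E_n$; conclude $J$ is a disjoint union over $n$ of the index sets of the decomposition of the injective hull of $E_n$, so $E\cong\bigoplus_n (\text{inj. hull of }E_n)$. Since each $E_n$ is already injective, its injective hull is $E_n$ itself, whence $E=\bigoplus_n E_n$ is injective.

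The step I expect to be the main obstacle is the factorisation/essentiality bookkeeping in the middle: controlling how the indecomposable summands $I_j$ of the injective hull distribute among the $E_n$, without accidentally assuming the noetherianity one is trying to prove. The temptation is to say ``a map from a finitely generated object to a coproduct factors through a finite subcoproduct'', but that is exactly a local-noetherian-flavoured statement, so I must instead lean on structural properties of \emph{injective} objects — uniformity of indecomposable injectives and the fact that injective hulls are essential extensions — to carry out the matching. Once that matching is in place the rest is formal, and the final ``in particular the category is locally noetherian'' is then immediate from Gabriel's theorem characterising locally noetherian Grothendieck categories by the stability of injectivity under coproducts.
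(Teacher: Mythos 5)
Your overall strategy --- take the injective hull of the coproduct, decompose it into indecomposable injectives using the hypothesis, and exploit essentiality to match those summands against the original injectives --- is the same as the paper's, and the preliminary reduction to countable coproducts, while legitimate, turns out to be superfluous: the paper runs the essentiality argument directly for an arbitrary index set at no extra cost. The genuine gap is the step where you ``use that $I_j$ is uniform to pin each $I_j$ inside the injective hull of a single $E_n$''. Uniformity of $I_j$ shows it can meet \emph{at most} one $E_n$ nontrivially (if $I_j \cap E_n \neq 0 \neq I_j \cap E_m$ with $n \neq m$, these two nonzero subobjects of $I_j$ would have to intersect, yet $E_n \cap E_m = 0$), but it does not force $I_j$ to meet \emph{any} single $E_n$: a nonzero subobject of a coproduct can avoid every coproduct factor. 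Already over $\base$-vector spaces, take $E_1 = E_2 = \base$ and let $I_j$ be the diagonal line in $\base^2$. So the claimed partition of the index set $J$ into pieces labelled by $n$, and the resulting isomorphism $E \cong \bigoplus_n E(E_n)$, are unjustified.

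The repair, which is what the paper does, is to aim lower. AB5 applied to the nonzero intersection $I_j \cap \bigoplus_n E_n$ produces a \emph{finite} subcoproduct $Q = \bigoplus_{n \in F_j} E_n$ with $X := I_j \cap Q \neq 0$. Since $I_j$ is uniform and injective, $X$ is essential in $I_j$, so $I_j$ is the injective hull of $X$. But $Q$ is a finite sum of injectives, hence injective, and contains $X$, so $I_j$ is a direct summand of $Q$ and therefore sits inside $\bigoplus_n E_n$. Running this over all $j$ gives $E = \bigoplus_j I_j \subseteq \bigoplus_n E_n \subseteq E$, forcing equality, so the coproduct is injective. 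This one-finite-subsum-per-$I_j$ matching is all you need; it avoids any temptation to invoke finite generation, and it renders the countable reduction unnecessary. The ``in particular'' clause is then handled exactly as you propose, by citing the standard equivalence between closure of injectives under coproducts and local noetherianity.
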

\begin{proof}
Let $\Lambda$ be an indexing set, let $\{J_\lambda\; \vert \; \lambda\in \Lambda\}$ be a set of injectives in $\sfA$, and let
\begin{displaymath}
F = E(\bigoplus_{\lambda \in \Lambda} J_\lambda)
\end{displaymath}
be the injective envelope of the direct sum of the $J_\lambda$. By hypothesis we can write
\begin{displaymath}
F = \bigoplus_{\gamma \in \Gamma} F_\gamma
\end{displaymath}
for some indexing set $\Gamma$, where each $F_\gamma$ is indecomposable and, as a summand of $F$, necessarily injective. By construction the inclusion
\begin{displaymath}
\bigoplus_\lambda J_\lambda \to F = \bigoplus_\gamma F_\gamma
\end{displaymath}
is essential and so, for all $\gamma\in \Gamma$, we have
\begin{displaymath}
(\bigoplus_\lambda J_\lambda) \cap F_\gamma \neq 0.
\end{displaymath}
So, fixing a $\gamma$, we can find $\lambda_1,\ldots,\lambda_n$ with
\begin{displaymath}
X = (F_\gamma \cap \bigoplus_{i=1}^n J_{\lambda_i}) \neq 0.
\end{displaymath}
But $F_\gamma$ is an indecomposable injective and so must be the injective envelope of $X$. From this we deduce that $F_\gamma$ is a summand of $\bigoplus_{i=1}^n J_{\lambda_i}$. Thus for each $\gamma$ we see $F_\gamma \subseteq \bigoplus_\lambda J_\lambda$. It follows immediately that
\begin{displaymath}
F = \bigoplus_\lambda J_\lambda
\end{displaymath}
and, in particular, the direct sum of the $J_\lambda$ is injective.

We have thus shown that every direct sum of injectives is injective. It is standard that this is equivalent to $\sfA$ being locally noetherian.
\end{proof}

Let us now explain how it follows from this lemma that not every object of $\sfD(\Gr R)$ can be a direct sum of indecomposable objects. By the graded analogue of the results of \cite{ALPP} the indecomposable objects of $\sfD(\Gr R)$ are all pure injective. So, if this were the case, then every pure injective object in $\sfD(\Gr R)$ would be a direct sum of indecomposable pure injective objects. Equivalently, this would say that every injective object in $\Modu \sfD^\mathrm{perf}(\gr R)$, the category of additive contravariant functors from the perfect complexes to abelian groups, was a direct sum of indecomposable objects\textemdash{}indecomposability is preserved since the restricted Yoneda functor is fully faithful on pure injective objects. By the lemma this would make $\Modu \sfD^\mathrm{perf}(\gr R)$ locally noetherian which is, in turn, equivalent to pure semisimplicity of $\sfD(\Gr R)$ \cite{KrTele}*{Theorem~2.10}. On the other hand $\base \in \sfD(\Gr R)$ is indecomposable and not compact so this conclusion is absurd.

\begin{rem}
The argument above is completely general and shows that if $\sfT$ is a compactly generated triangulated category in which every object is a direct sum of indecomposable pure injectives then $\sfT$ is pure semisimple. In fact, one can even remove the pure injectivity hypothesis provided passing to modules over $\sfT^c$ preserves indecomposability.
\end{rem}

\subsection{There are many indecomposable summands}

Despite the result of the last section it turns out that ``large'' objects of $\sfD(\Gr R)$ generally have many indecomposable summands. In fact, any cohomology class of an object of $\sfD(\Gr R)$ can be realised by a summand lying in $\sfD^\mathrm{b}(\gr R)$. Although the category $\sfD(\Gr R)$ is very special this is, nonetheless, an amusing fact and as alluded to earlier could be useful in understanding the lattice of localising subcategories, which would in turn shed light on general order $2$ nilpotent thickenings.

The argument involves a lot of technicalities as it is based on constructing families of compatible splittings. Since this is somewhat tangential to the main points of the paper we just provide a few observations that could be useful in treating the localising subcategories by reducing to considering bounded complexes to at least some extent.

Given a cochain complex $E$ we denote by $Z^j(E)$ and $B^j(E)$ the $j$-cocycles and $j$-coboundaries, i.e.\ $\ker d^j$ and $\im d^{j-1}$, respectively.  

The first step is to remove free summands from the cohomology. This is relatively straightforward since $R$ is self-injective.

\begin{lem}\label{lem_freesplitpayoff}
Let $E$ be a complex of graded $R$-modules. Then in $\sfD(\Gr R)$ we can decompose $E$ as $E' \oplus E''$ with $E' \in\Add(\Sigma^jR(i)\;\vert\; i,j\in \ZZ)$ and $E''$ having no free summands in its cohomology i.e.\ $H^j(E'')\in \Add(\base(i)\;\vert\; i\in \ZZ)$ for each $j\in \ZZ$.
\end{lem}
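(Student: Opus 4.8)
The plan is to exploit the self-injectivity of $R$, together with the fact that $\Gr R$ is graded representation-finite with $\Gr R = \Add(R(i),\base(j)\;\vert\; i,j\in\ZZ)$, to peel the free part off the cohomology degree by degree. First I would recall that a complex $E$ of graded $R$-modules can be replaced, up to isomorphism in $\sfD(\Gr R)$, by a complex with zero differential on any degree where this is convenient only after first splitting; so instead the cleaner approach is to work with $E$ itself and split off contractible pieces. Fix $j\in\ZZ$ and consider $H^j(E) = Z^j(E)/B^j(E)$. Since every graded $R$-module is a sum of copies of $R(i)$ and $\base(i)$, we may write $H^j(E) = P^j \oplus T^j$ with $P^j\in\Add(R(i)\;\vert\; i\in\ZZ)$ free and $T^j\in\Add(\base(i)\;\vert\; i\in\ZZ)$. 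The goal is to realise $\bigoplus_j \Sigma^{-j} P^j$ (suitably spelled out) as a direct summand $E'$ of $E$ in the derived category, leaving a complement $E''$ with $H^j(E'')\cong T^j$.

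The key step is the construction of the splitting at the level of complexes. Since $P^j$ is free and hence (by self-injectivity of $R$) also injective in $\Gr R$, the surjection $Z^j(E)\repi H^j(E)\repi P^j$ splits, giving a sub-bimodule-free summand; dually, using injectivity of $P^j$ one lifts the inclusion $P^j\hookrightarrow H^j(E)$ and then, using that $P^j$ is projective, one lifts through $Z^j(E)\repi H^j(E)$ to get a map $P^j\to Z^j(E)\subseteq E^j$ splitting the composite. The point is that a free-and-injective summand of the cohomology in a single degree $j$ can be carved off as a direct summand stalk complex $\Sigma^{-j}P^j$: one builds an explicit chain map $\Sigma^{-j}P^j\to E$ (the lift above, viewed as a map of complexes since $P^j$ consists of cocycles) and a retraction $E\to\Sigma^{-j}P^j$ using that $P^j$ is injective so that the projection $Z^j(E)\repi P^j$ extends to $E^j$ and kills $B^j(E)$ after adjusting. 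Doing this simultaneously over all $j$ — the degrees do not interact because the summands land in cocycles and the retractions can be chosen compatibly — yields $E\cong E'\oplus E''$ with $E' = \bigoplus_{j\in\ZZ}\Sigma^{-j}P^j\in\Add(\Sigma^j R(i)\;\vert\; i,j\in\ZZ)$ and $H^j(E'')\cong T^j\in\Add(\base(i)\;\vert\; i\in\ZZ)$.

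The main obstacle I anticipate is the compatibility of the splittings across all cohomological degrees at once: carving off the free summand in degree $j$ modifies the cocycles and coboundaries in a way that could interfere with the analogous construction in degrees $j\pm 1$. The way around this is to observe that the chosen lift $P^j\to E^j$ has image inside $Z^j(E)$ and that the differential $d^{j}$ vanishes on it, so the summand $\Sigma^{-j}P^j$ is genuinely a subcomplex; and the retraction, built from an extension of $Z^j(E)\repi P^j$ to $E^j$ along the injective $P^j$, can be taken to vanish on $B^j(E)$ after subtracting a nullhomotopic correction, so it commutes with the differential. Since these local modifications only ever touch the degree-$j$ strand and its boundary, no incompatibility arises and one may form the direct sum over $j\in\ZZ$ with no convergence issue (the sum is degreewise finite after fixing a degree only if $E$ is bounded; in general one simply takes the coproduct in $\sfD(\Gr R)$, which exists). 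Alternatively, and perhaps more cleanly, one can induct: split off one homogeneous free summand $\Sigma^{-j}R(i)$ of the cohomology at a time and pass to the limit, invoking that $\Add(\Sigma^j R(i)\;\vert\; i,j)$ is closed under coproducts in $\sfD(\Gr R)$.
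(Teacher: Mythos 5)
The paper does not actually include a proof of this lemma; it is stated with the remark that it is ``relatively straightforward since $R$ is self-injective,'' so there is no written argument to compare against. Your proposal supplies a correct proof and it is clearly the one the authors have in mind; here are a few remarks showing that the worries you raise do not in fact arise. The construction is entirely at the chain level and the cohomological degrees genuinely do not interact, because one does not iteratively modify $E$. For each $j$ fix a decomposition $H^j(E)\cong P^j\oplus T^j$ with $P^j$ free and $T^j$ semisimple, let $\iota_j\colon P^j\to Z^j(E)$ be a lift of the inclusion $P^j\hookrightarrow H^j(E)$ through $Z^j(E)\twoheadrightarrow H^j(E)$ (projectivity of $P^j$), and let $\rho_j\colon E^j\to P^j$ be an extension of the composite $Z^j(E)\twoheadrightarrow H^j(E)\twoheadrightarrow P^j$ along $Z^j(E)\hookrightarrow E^j$ (injectivity of $P^j$). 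No ``adjustment'' or nullhomotopic correction is needed: $\rho_j$ automatically vanishes on $B^j(E)$, since its restriction to $Z^j(E)$ factors through $H^j(E)$ by construction. Setting $E' = \bigoplus_j \Sigma^{-j}P^j$\textemdash{}which in cohomological degree $k$ is simply $P^k$ with zero differential, so there is no convergence issue and $E'\in\Add(\Sigma^j R(i))$ as required\textemdash{}the families $\iota = (\iota_j)_j$ and $\rho = (\rho_j)_j$ are chain maps $E'\to E$ and $E\to E'$: we have $d^k\iota_k = 0$ since $\iota_k$ lands in cocycles, and $\rho_{k+1}d^k = 0$ since $\rho_{k+1}$ kills $B^{k+1}(E)$. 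As $\rho\iota = 1_{E'}$, this splits $E\cong E'\oplus E''$ already at the level of complexes, hence in $\sfD(\Gr R)$, with $H^j(E'')\cong T^j$ semisimple by the Krull--Remak--Schmidt--Azumaya decomposition of $H^j(E)$ in $\Gr R$.
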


We can thus reduce to considering complexes whose cohomology groups are semisimple. 

Recall that $E$ is a homotopically minimal complex of injectives if each $Z^j(E)\to E^j$ is an injective envelope and that any complex is quasi-isomorphic to such a complex; this can be seen by taking a K-injective resolution of $E$ and then applying \cite{KrStab}*{Proposition~B.2}. So from this point onward we may as well fix a homotopically minimal complex of injectives $E$ whose cohomology groups all lie in $\Add(\base(j)\;\vert\; j\in \ZZ)$.

The next step is to show such an $E$ is also a homotopically minimal complex of projectives, i.e.\ each of the canonical morphisms
\begin{displaymath}
F^i \to \coker d^{i-1}
\end{displaymath}
is a projective cover. This is accomplished by further analysing the cocycles and coboundaries, the key points being given by the following technical lemma.

\begin{lem}\label{lem_socle}
Suppose we are given a monomorphism $\alpha\colon R(j)\to Z^i(E)$. Then
\begin{displaymath}
\alpha(R(j))\cap B^i(E) = \alpha((x)) = \soc(\alpha(R(j))).
\end{displaymath}
In particular, $B^i(E)\in \Add(\base(j)\;\vert \;j\in \ZZ)$ for all $i\in \ZZ$. 
\end{lem}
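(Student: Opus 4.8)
The statement concerns a homotopically minimal complex of injectives $E$ whose cohomology lies in $\Add(\base(j)\mid j\in\ZZ)$, and a monomorphism $\alpha\colon R(j)\to Z^i(E)$; I need to identify the intersection of the image of $\alpha$ with the coboundaries $B^i(E)$. First I would recall the structure of $R=\base[x]/(x^2)$: every graded $R$-module is a sum of copies of $R(i)$ and $\base(i)$ (this is stated in the preliminaries), and $R$ is self-injective, so the injectives in $\Gr R$ are exactly the sums of shifts of $R$. In particular $\soc(R(j))=(x)(j)\cong\base(j-1)$, which gives the identification $\alpha((x))=\soc(\alpha(R(j)))$ for free; the content of the lemma is the first equality $\alpha(R(j))\cap B^i(E)=\alpha((x))$.

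\textbf{Key steps.} I would argue both inclusions. For $\supseteq$: since $E$ is homotopically minimal, $Z^i(E)\hookrightarrow E^i$ is an injective envelope, hence an essential extension; I would use this, together with the fact that $d^{i-1}\colon E^{i-1}\to E^i$ factors through $Z^i(E)$ and that $B^i(E)$ is a nonzero submodule meeting every nonzero submodule of the injective hull — combined with essentiality of the relevant inclusions arising from minimality — to force $\soc(\alpha(R(j)))\subseteq B^i(E)$. Concretely, $\alpha(R(j))$ is a free summand of $Z^i(E)$ (a shift of $R$ is injective, hence the monomorphism splits), and I would examine the induced map on this summand; the essential-extension property of homotopically minimal complexes prevents a free summand of $Z^i$ from being disjoint from $B^i$. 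For $\subseteq$: I would argue that $\alpha(R(j))\cap B^i(E)$ cannot be all of $\alpha(R(j))$, because that would make the free module $R(j)$ a direct summand of $B^i(E)\subseteq Z^i(E)$ mapping into the coboundaries, producing a contractible free summand of $E$ and contradicting homotopic minimality — this is where the hypothesis that the cohomology is semisimple (no free summands, by the previous reduction) and that $E$ is homotopically minimal both get used. So the intersection is a proper nonzero submodule of $R(j)$, and the only such is $(x)(j)=\soc$; that pins it down exactly.

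\textbf{The ``in particular''.} Once the first equality holds, $B^i(E)$ is covered by socles of free summands of $Z^i(E)$: one writes $Z^i(E)$ as a sum of indecomposables (free $R$-summands and simple $\base$-summands, using graded representation-finiteness of $R$), notes that $B^i(E)$ is a submodule, and observes that $d^{i-1}$ lands in $Z^i(E)$ and that by the computation above each element of $B^i(E)$ lies in the socle part. More carefully, I would show $B^i(E)$ is a semisimple module by checking $x\cdot B^i(E)=0$: any $b\in B^i(E)$ lies in some finite sub-sum of summands of $E^{i-1}$, push forward, and use the first equality summand-by-summand; since a simple $\base(j)$-summand of $Z^i(E)$ is already killed by $x$, and a free summand contributes only its socle to $B^i$, we get $x B^i(E)=0$, hence $B^i(E)\in\Add(\base(j)\mid j\in\ZZ)$.

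\textbf{Main obstacle.} The delicate point is the $\supseteq$ direction — showing that the socle of a free summand of $Z^i(E)$ genuinely is hit by $d^{i-1}$. This is precisely where homotopic minimality must be leveraged: a priori a free summand of the cocycles could be a free summand of the cohomology, but the standing assumption rules that out, so the summand must ``see'' the differential, and minimality (essentiality of $Z^i\hookrightarrow E^i$, or equivalently that $E^i\to\coker d^{i-1}$ has no splitting on any summand) forces the image of $d^{i-1}$ to reach into it — and into a free module $R(j)$ the only place a differential landing in cocycles-but-not-being-a-cohomology-summand can reach minimally is the socle. Making this essentiality argument airtight, rather than hand-wavy, is the real work; the rest is the standard structure theory of modules over the dual numbers.
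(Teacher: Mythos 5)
Your overall structure is right: since $\alpha(R(j))\cong R(j)$ has only three submodules ($0$, $\soc$, everything), the lemma amounts to excluding the two extremes, and your exclusion of $\alpha(R(j))\cap B^i(E)=\alpha(R(j))$ via projectivity/injectivity of $R(j)$ and homotopic minimality matches the paper's. But your argument for the other exclusion ($\alpha(R(j))\cap B^i(E)\neq 0$) does not work as stated. You claim ``$B^i(E)$ is a nonzero submodule meeting every nonzero submodule of the injective hull'' and that essentiality of $Z^i(E)\hookrightarrow E^i$ ``prevents a free summand of $Z^i$ from being disjoint from $B^i$.'' Neither is true: essentiality of $Z^i(E)$ in $E^i$ tells you that nonzero submodules of $E^i$ meet $Z^i(E)$, but it says nothing about submodules of $Z^i(E)$ meeting $B^i(E)$. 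In fact, whenever $H^i(E)\neq 0$ one can lift a simple summand of $H^i(E)$ to a nonzero submodule of $Z^i(E)$ disjoint from $B^i(E)$, so $B^i(E)$ is \emph{not} essential in $Z^i(E)$, and the essentiality route dead-ends.

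The tool you actually need here is the semisimplicity of $H^i(E)$, which you correctly invoke only in passing in your final paragraph and misattribute to the $\subseteq$ direction where it is not needed. The clean argument is: if $\alpha(R(j))\cap B^i(E)=0$, then composing $\alpha$ with the quotient $Z^i(E)\twoheadrightarrow H^i(E)$ gives a monomorphism $R(j)\hookrightarrow H^i(E)$; but $H^i(E)\in\Add(\base(j)\mid j\in\ZZ)$ is semisimple while $R(j)$ is not, a contradiction. This is precisely what the paper does (``$\alpha(R(j))\cap B^i(E)=0$ would contradict the cohomology being semisimple''). So: keep your $\subseteq$ direction, but replace the essentiality reasoning in the $\supseteq$ direction by this direct use of semisimple cohomology. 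For the ``in particular,'' your plan works, but a shorter route is to note that for $0\neq b\in B^i(E)$ the cyclic module $Rb\subseteq Z^i(E)$ is either free or simple; if free, applying the first part to $Rb$ forces $b\in\soc(Rb)$, contradicting that $b$ generates, so $Rb$ is simple and $xb=0$.
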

\begin{proof}
As $R(j)$ is injective we can split $\alpha$ and the further inclusion $\alpha'\colon R(j)\to E^i$ compatibly by considering the diagram
\begin{displaymath}
\xymatrix{
R(j) \ar[d]_{1_{R(j)}} \ar[r]^-\alpha & Z^i(E) \ar@{-->}[dl]_-\beta \ar[r] & E^i \ar@{-->}[dll]^-{\beta'} \\
R(j) & &
}
\end{displaymath}
One then checks, exploiting self-injectivity of $R$, that $\alpha(R(j)) \subseteq B^i(E)$ would contradict minimality of $E$ and $\alpha(R(j)) \cap B^i(E) = 0$ would contradict the cohomology being semisimple.
\end{proof}

\begin{lem}\label{lem_homin}
The complex $E$ is a homotopically minimal complex of projectives.
\end{lem}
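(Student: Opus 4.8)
The plan is to exploit the fact that over the self-injective ring $R$ every injective module is projective, and then to read the minimality condition off directly from Lemma~\ref{lem_socle}.

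First I would record that the injective graded $R$-modules are exactly the direct sums of the $R(i)$. Indeed, by the description $\Gr R = \Add(R(i),\base(j)\;\vert\;i,j\in\ZZ)$ every graded module splits as a sum of copies of the $R(i)$ and of the $\base(j)$; since $\base$, being the socle of the indecomposable module $R$, is not injective (the inclusion $\soc R\hookrightarrow R$ does not split), an injective module can have no $\base(j)$-summand. In particular each $R(i)$, hence each $E^i$, is projective, so $E$ is automatically a complex of projectives. What remains is to check that for every $i$ the canonical surjection $E^i\twoheadrightarrow\coker d^{i-1}=E^i/B^i(E)$ is a projective cover, i.e.\ that its kernel $B^i(E)$ is a superfluous submodule of $E^i$.

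Next I would apply Lemma~\ref{lem_socle}: it gives $B^i(E)\in\Add(\base(j)\;\vert\;j\in\ZZ)$, so $B^i(E)$ is semisimple and therefore contained in $\soc(E^i)$. Writing $E^i=\bigoplus_k R(i_k)$ and using $\soc(R)=(x)=\mathrm{rad}(R)$ we get $\soc(E^i)=xE^i$, hence $B^i(E)\subseteq xE^i$. It now suffices to know that $xE^i$ is superfluous in $E^i$, and this is exactly where the relation $x^2=0$ does the work: if $L\subseteq E^i$ satisfies $L+xE^i=E^i$, then $x(E^i/L)=E^i/L$, so $E^i/L=x(E^i/L)=x^2(E^i/L)=0$, that is $L=E^i$. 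Since a submodule of a superfluous submodule is again superfluous, $B^i(E)$ is superfluous and $E^i\to\coker d^{i-1}$ is a projective cover. As this holds for all $i\in\ZZ$, $E$ is a homotopically minimal complex of projectives.

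I do not anticipate a genuine obstacle here; the bulk of the content was already done in Lemma~\ref{lem_socle}. The only subtlety is that $E^i$ may be an infinite direct sum of shifts of $R$, so one cannot simply invoke ``the radical is superfluous'' (which is delicate for non-finitely generated modules); the square-zero identity $x^2=0$ circumvents this cleanly, and everything else is bookkeeping.
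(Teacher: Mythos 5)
Your proof is correct, and it takes a genuinely different route from the paper's. The paper proceeds by explicit decomposition: using Lemma~\ref{lem_socle} it writes $Z^i(E)=F\oplus T\oplus T'$ with $F$ free and $T,T'$ semisimple, identifies the image of $B^i(E)=S\oplus S'$ as $xF\oplus T$, passes to injective envelopes to write $E^i=F\oplus E(T)\oplus E(T')$, and then asserts that the resulting map $E^i\to F/xF\oplus E(T)/xE(T)\oplus E(T')$ is "visibly a projective cover." You instead use only the coarser output of Lemma~\ref{lem_socle} (that $B^i(E)$ is semisimple, hence lands in $\soc(E^i)=xE^i$) and reduce the whole question to the superfluousness of $xE^i$, which you settle directly via $x^2=0$. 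The trade-off is essentially between concreteness and robustness: the paper's version exhibits the projective cover on the nose, whereas yours avoids the implicit claim that an (arbitrary, possibly infinite) direct sum of projective covers is again a projective cover — a point the paper does not spell out and which, as you rightly flag, is exactly where one should be careful when $E^i$ has infinite rank. Your nilpotency argument disposes of that subtlety cleanly, so in this respect your write-up is tighter than the paper's.
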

\begin{proof}
Using the previous lemma we can write
\begin{displaymath}
Z^i(E) = F\oplus T \oplus T' \quad \text{and} \quad B^i(E) = S\oplus S',
\end{displaymath}
where $F$ is free and $T, T', S,$ and $S'$ are in $\Add(\base(j)\;\vert\; j\in \ZZ)$, such that the inclusion $B^i(E)\to Z^i(E)$ has the form
\begin{displaymath}
\xymatrix{
S\oplus S' \ar[rr]^-{\begin{pmatrix}x & 0 & 0 \\ 0 & 1 & 0 \end{pmatrix}} && F\oplus T \oplus T'
}
\end{displaymath}
Using this description and the fact that $E$ is a homotopically minimal complex of injectives we can write $E^i \to \coker d^{i-1}$ as
\begin{displaymath}
E^i = F \oplus E(T) \oplus E(T') \to F/xF \oplus E(T)/xE(T) \oplus E(T')
\end{displaymath}
where $E(-)$ denotes the injective envelope and the map is the obvious projection. This is visibly a projective cover and so $E$ is a homotopically minimal complex of projectives as claimed. 
\end{proof}

One can now start producing summands of $E$. This is done in two cases, depending on whether we need to start building a subcomplex of $E$ to the left or to the right of our starting point. Let us assume $H^i(E) \neq 0$ and choose an embedding $\alpha\colon \base \to H^i(E)$. It is sufficient to treat the case of $\base$ living in degree $0$ as one can just apply the grading shift $(i)$ to deal with copies of $\base(i)$.

\subsubsection{First case}\label{sec_1st}
Suppose there is a $\gamma^i\colon \base\to Z^i(E)$ making the diagram
\begin{displaymath}
\xymatrix{
\base \ar[d]_-{\gamma^i} \ar[dr]^-\alpha & \\
Z^i(E) \ar[r] & H^i(E)
}
\end{displaymath}
commute, i.e.\ the image of $\alpha$ is realised by a copy of $\base$ in the cocycles. As $E$ is minimal there is a unique summand $R(1)$ of $E^i$, given by a map $\widetilde{\gamma}^i\colon R(1) \to E^i$, into which $\gamma^i(\base)$ embeds. We can complete this to a commutative diagram
\begin{displaymath}
\xymatrix{
0 \ar[r] \ar[d] & \base \ar[d]^-{\gamma^i} \ar[r] & R(1) \ar[d]^-{\widetilde{\gamma}^i} \ar[r] & \base(1) \ar[d]^-{\exists!\;\gamma^{i+1}} \\
E^{i-1} \ar[r]_-d & E^i \ar@{=}[r] & E^i \ar[r]_-d & E^{i+1}
}
\end{displaymath}
where the rightmost map is just the factorisation of $d\widetilde{\gamma}^i$ via its image. Again using that $E$ is minimal we can produce, essentially uniquely, a further commutative square
\begin{displaymath}
\xymatrix{
\base(1) \ar[r] \ar[d]_-{\gamma^{i+1}} & R(2) \ar[d]^-{\widetilde{\gamma}^{i+1}} \\
E^{i+1} \ar@{=}[r] & E^{i+1}
}
\end{displaymath}
by factoring $\gamma^{i+1}$ via an injective envelope. Noting that the composite
\begin{displaymath}
R(1) \to \base(1) \to R(2)
\end{displaymath}
is just multiplication by $x$ all this taken together gives us a commutative diagram
\begin{equation}\label{eq_diag1}
\xymatrix{
0 \ar[r] \ar[d] & \ar[r] R(1) \ar[r]^-x \ar[d]_-{\widetilde{\gamma}^i} & R(2) \ar[d]^-{\widetilde{\gamma}^{i+1}} \\
E^{i-1} \ar[r]_-d & E^i \ar[r]_-d & E^{i+1}
}
\end{equation}

There are now two possibilities: either $\widetilde{\gamma}^{i+1}R(2)$ lies in $Z^{i+1}(E)$, in which case we are done, or it does not. If $\widetilde{\gamma}^{i+1}R(2)$ does not lie in $Z^{i+1}(E)$ then we can iterate the above procedure, producing monomorphisms
\begin{displaymath}
\widetilde{\gamma}^{i+n}\colon R(n+1)\to E^{i+n}
\end{displaymath}
either indefinitely or until $\widetilde{\gamma}^{i+n}R(n+1)$ lies in $Z^{i+n}(E)$. In this way we produce a complex $E_\gamma$ which is the unique indecomposable perfect complex with the given length if the process terminates at some finite stage or a minimal injective resolution of $\base$ otherwise. In either case it is certainly in $\sfD^\mathrm{b}(\gr R)$ and we have the following lemma.

\begin{lem}\label{lem_splitter}
The map $\widetilde{\gamma}\colon E_\gamma \to E$ given by assembling the $\widetilde{\gamma}^j$ exhibits $E_\gamma$ as a subcomplex of $E$. This inclusion admits a retraction which can be constructed explicitly. Moreover, $E_\gamma$ is an indecomposable complex in $\sfD^\mathrm{b}(\gr R)$ and the morphism $H^i(\widetilde{\gamma})$ is none other than the map $\alpha$ we started with.
\end{lem}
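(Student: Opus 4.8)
The plan is to establish the four assertions of the lemma in turn; the substantive one is the construction of the retraction. That $\widetilde{\gamma}$ realises $E_\gamma$ as a subcomplex is essentially immediate: by construction each $\widetilde{\gamma}^{j}$ is the inclusion of one of the injective summands $R(n+1)$ of $E^{j}$ extracted using the minimality of $E$, hence a split monomorphism, and the squares in~(\ref{eq_diag1}) and its iterates commute by the way $\widetilde{\gamma}^{j+1}$ was produced from $d\widetilde{\gamma}^{j}$. For later use I would record, as consequences of minimality together with Lemma~\ref{lem_socle}, the following: the differential restricted to each summand $\widetilde{\gamma}^{i+n}(R(n+1))$ has kernel exactly its socle (unless the procedure terminates there, in which case the whole summand consists of cocycles); $\widetilde{\gamma}^{i}(R(1))\cap B^{i}(E)=0$, since otherwise the class $\alpha$ would be a coboundary; and for $n\geq1$ the intersection $\widetilde{\gamma}^{i+n}(R(n+1))\cap B^{i+n}(E)$ equals $\gamma^{i+n}(\base(n))=\im(d\widetilde{\gamma}^{i+n-1})$, the socle of that summand.

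Next I would construct $\rho\colon E\to E_\gamma$, meaning an actual chain map with $\rho\widetilde{\gamma}=\id_{E_\gamma}$, by induction on cohomological degree, taking $\rho^{j}=0$ for $j<i$. In degree $i$, injectivity of $R(1)$ permits a choice of $\rho^{i}\colon E^{i}\to R(1)$ which inverts $\widetilde{\gamma}^{i}$ on its summand, vanishes on $B^{i}(E)$ (possible since this meets the summand only in $0$), and carries $Z^{i}(E)$ into $\soc R(1)$ (possible since $H^{i}(E)\in\Add(\base(j))$, so one first prescribes $\rho^{i}$ on $\widetilde{\gamma}^{i}(R(1))+Z^{i}(E)$ and then extends); vanishing on $B^{i}(E)$ is exactly the chain-map identity in degrees $\leq i$. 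For the inductive step, assume $\rho^{i+n}$ inverts $\widetilde{\gamma}^{i+n}$, satisfies the chain-map identity with $\rho^{i+n-1}$, and sends $Z^{i+n}(E)$ into $\soc R(n+1)$. Post-composing $\rho^{i+n}$ with the differential of $E_\gamma$ then kills $Z^{i+n}(E)=\ker d$, so it factors uniquely through $B^{i+n+1}(E)$ as a map $\psi$ with image in $\soc R(n+2)$; using~(\ref{eq_diag1}) one checks $\psi$ agrees with $(\widetilde{\gamma}^{i+n+1})^{-1}$ on $B^{i+n+1}(E)\cap\widetilde{\gamma}^{i+n+1}(R(n+2))$, and then, by injectivity of $R(n+2)$ and the cocycle/coboundary facts recorded above, one extends $\psi$ and $(\widetilde{\gamma}^{i+n+1})^{-1}$ simultaneously to $\rho^{i+n+1}\colon E^{i+n+1}\to R(n+2)$ with $\rho^{i+n+1}\circ d=d\circ\rho^{i+n}$, $\rho^{i+n+1}\widetilde{\gamma}^{i+n+1}=\id$, and $\rho^{i+n+1}(Z^{i+n+1}(E))\subseteq\soc R(n+2)$, which is the inductive hypothesis one degree higher. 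Assembling the $\rho^{j}$ yields the retraction. I expect this step---arranging the degreewise retractions so as to commute with the differentials, with minimality of $E$ and Lemma~\ref{lem_socle} as the only leverage---to be the one genuine difficulty; the surrounding arguments are bookkeeping.

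For the remaining two points: if the procedure terminates then, after the obvious identification, $E_\gamma$ is a bounded complex $R(1)\to\cdots\to R(m)$ with differentials multiplication by $x$, and otherwise it is a minimal injective resolution of $\base$ and so quasi-isomorphic to the stalk $\base$; in either case $E_\gamma\in\sfD^{\mathrm b}(\gr R)$, and it is indecomposable---the finite complexes being the indecomposable perfect complexes over $R$ of the relevant length (equivalently, via Theorem~\ref{thm_bgg}, the indecomposable objects $M_{(a,b)}$ of $\sfD^{\mathrm b}(\gr S)$), and $\base$ since $\End_{\sfD^{\mathrm b}(\gr R)}(\base)\cong\base$. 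Finally, $H^{i}(\widetilde{\gamma})=\alpha$ is read off the construction: on cocycles $\widetilde{\gamma}^{i}$ carries $Z^{i}(E_\gamma)=\soc R(1)$ onto $\gamma^{i}(\base)$, so under the identification $H^{i}(E_\gamma)\cong\base$ the induced map on $H^{i}$ is the composite of $\gamma^{i}$ with $Z^{i}(E)\to H^{i}(E)$, which is $\alpha$ by the defining property of $\gamma^{i}$.
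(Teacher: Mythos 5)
The overall strategy you use — build the retraction degree by degree, starting at degree~$i$ with $\rho^j=0$ for $j<i$, constraining each $\rho^j$ by the chain‑map identity, the retraction condition, and a socle condition so that the factoring through $B^{j+1}(E)$ exists at the next step — is exactly the shape of the argument the authors have in mind (the proof in the paper is only a sketch saying the section is produced ``step by step'' and ``requires a fair amount of artifice''). Your treatment of the other three assertions (subcomplex, indecomposability, $H^i(\widetilde\gamma)=\alpha$) matches the paper and is fine.

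However, there is a real gap in the inductive step, and it is precisely the ``artifice'' the authors flag. You claim that injectivity of $R(n+2)$ lets you extend $\psi$ and $(\widetilde{\gamma}^{i+n+1})^{-1}$ simultaneously so that, in addition, $\rho^{i+n+1}(Z^{i+n+1}(E))\subseteq\soc R(n+2)$. This last condition is not just a convenient extension; it is an honest constraint. A module map $Z^{i+n+1}(E)\to\soc R(n+2)$ lands in a simple module, hence factors through $Z^{i+n+1}(E)/\operatorname{rad}Z^{i+n+1}(E)$ and must kill $xZ^{i+n+1}(E)$. Write $Z^{i+n+1}(E)=F\oplus T\oplus T'$ as in Lemma~\ref{lem_homin}; then $xZ^{i+n+1}(E)=\soc F$, and $\soc F\subseteq B^{i+n+1}(E)$. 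Since the chain‑map identity forces $\rho^{i+n+1}|_{B^{i+n+1}(E)}=\psi$, your extension can only exist if $\psi$ already vanishes on $\soc F$. Unwinding $\psi(de)=x\rho^{i+n}(e)$, this means: for every $e\in E^{i+n}$ with $de\in xZ^{i+n+1}(E)$ one needs $\rho^{i+n}(e)\in\soc R(n+1)$. But your inductive hypothesis only records $\rho^{i+n}(Z^{i+n}(E))\subseteq\soc R(n+1)$, and $(d^{i+n})^{-1}(xZ^{i+n+1}(E))$ is in general strictly larger than $Z^{i+n}(E)$ whenever $F\neq 0$. So the induction as formulated does not close: injectivity of $R(n+2)$ lets you extend a map into $R(n+2)$, but it does not help you land in $\soc R(n+2)$ in the presence of the forced values on $\soc F$.

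To repair this you would need either to strengthen the inductive hypothesis to something like $\rho^{i+n}\bigl((d^{i+n})^{-1}(xZ^{i+n+1}(E))\bigr)\subseteq\soc R(n+1)$ and show this (look‑ahead) condition can be achieved and propagated, or restructure the construction — for example, when $E_\gamma$ is bounded one can run the induction downward from the top degree, where the only obstruction is a self‑map of $R(n+1)$ landing in its socle, and this can always be corrected by post‑composing with $-\eta r$ for a splitting $r$ of $\widetilde\gamma^{j}$; the infinite case then needs a separate argument. As written, the assertion that ``one extends \ldots\ simultaneously'' elides the genuine difficulty the authors are pointing to, so this step needs to be justified before the proof is complete.
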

\begin{proof}
By construction we have $d\widetilde{\gamma}^j = \widetilde{\gamma}^{j+1}d$ (see for instance (\ref{eq_diag1})) and so $E_\gamma$ is a subcomplex of $E$ as claimed. The complex $E_\gamma$ is evidently an indecomposable bounded complex of finitely generated free modules and at the beginning of the story we picked $\widetilde{\gamma}^i$ precisely so that it induced $\alpha$ on cohomology. The real work is in producing a section for $\widetilde{\gamma}$. This can be accomplished by producing sections step by step during the process of producing $E_\gamma$, a task which requires a fair amount of artifice and is somewhat tedious.
\end{proof}

\subsubsection{Second case}\label{sec_2nd}
Again let us assume $H^i(E) \neq 0$ and choose an embedding $\alpha = \alpha^i\colon \base \to H^i(E)$.

If we are not in the first case we must have a monomorphism, which is necessarily split, $\widetilde{\alpha}^i\colon R\to Z^i(E)$ making the following square commute
\begin{displaymath}
\xymatrix{
R \ar[r] \ar[d]_-{\widetilde{\alpha}^i} & \base\ar[d]^-{\alpha^i} \\
Z^i(E) \ar[r] & H^i(E)
}
\end{displaymath}
We proceed by considering the commutative diagram
\begin{displaymath}
\xymatrix{
R(-1) \ar[dr] \ar@{-->}[ddr]_-{\widetilde{\alpha}^{i-1}} \ar[r] & \base(-1) \ar[d]^-{\alpha^{i-1}} \ar[r] & R \ar[r] \ar[d]^-{\widetilde{\alpha}^i}  & \base\ar[d]^-{\alpha^i} \\
& B^i(E) \ar[r] & Z^i(E) \ar[d] \ar[r] & H^i(E) \\
& E^{i-1} \ar[u] \ar[r]_-{d^{i-1}} & E^i
}
\end{displaymath}
where the dashed arrow exists by projectivity of $R(-1)$ and $\widetilde{\alpha}^{i-1}$ can be chosen to be a monomorphism. Again we can iterate, constructing a subcomplex $E_\alpha$ in increasingly negative degrees together with compatible retractions, either indefinitely or until we hit an $R(-n)\in Z^{i-n}(E)$. This gives the ``dual'' version of Lemma~\ref{lem_splitter} (``dual'' in the sense that the construction is dual, the statement doesn't change much):

\begin{lem}\label{lem_splitter2}
The map $\widetilde{\alpha}\colon E_\alpha \to E$ given by assembling the $\widetilde{\alpha}^j$ exhibits $E_\alpha$ as a subcomplex of $E$. This inclusion admits a retraction which can be constructed explicitly. Moreover, $E_\alpha$ is an indecomposable complex in $\sfD^\mathrm{b}(\gr R)$ and the morphism $H^i(\widetilde{\alpha})$ is none other than the map $\alpha$ we started with.
\end{lem}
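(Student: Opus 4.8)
The plan is to mirror the proof of Lemma~\ref{lem_splitter} with every construction replaced by its dual, treating the transposable parts briefly and concentrating on the one step that requires real work. First I would observe that $\widetilde{\alpha} = (\widetilde{\alpha}^j)_j$ is a chain map exhibiting $E_\alpha$ as a subcomplex of $E$: this is built into the construction of Section~\ref{sec_2nd}, whose commuting squares say precisely that $d^{j-1}_E\widetilde{\alpha}^{j-1} = \widetilde{\alpha}^j d^{j-1}_{E_\alpha}$, the differentials of $E_\alpha$ being multiplication by $x$, and each $\widetilde{\alpha}^j$ was chosen to be a monomorphism, hence a split one since each $R(-n)$ is injective. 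Likewise $H^i(\widetilde{\alpha}) = \alpha$ is immediate, since $\widetilde{\alpha}^i\colon R\to Z^i(E)$ was picked so that $R\to Z^i(E)\to H^i(E)$ is the composite of $R\twoheadrightarrow\base$ with $\alpha$. For the remaining assertions, note that by construction $E_\alpha$ is the complex $\cdots\to R(-2)\xrightarrow{x}R(-1)\xrightarrow{x}R$ concentrated in cohomological degrees $\leq i$; if the process of Section~\ref{sec_2nd} halts this is a bounded complex of finitely generated free modules, and otherwise it is the minimal free resolution of the simple module $\base$ placed in degree $i$, hence quasi-isomorphic to the stalk complex $\base$ in degree $i$. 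In either case $E_\alpha$ lies in $\sfD^\mathrm{b}(\gr R)$ and is one of its indecomposable objects (cf.~\cite{ALPP}).

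The substantive step, exactly as in Lemma~\ref{lem_splitter}, is producing the retraction $r\colon E\to E_\alpha$ with $r\widetilde{\alpha} = \mathrm{id}_{E_\alpha}$. I would build a compatible family of splittings $\rho^{i-n}\colon E^{i-n}\to R(-n)$ of the termwise-split monomorphisms $\widetilde{\alpha}^{i-n}$ by downward induction on the degree, starting from a splitting of $\widetilde{\alpha}^i$, which exists since $R$ is injective. At the inductive step one chooses any degreewise splitting of $\widetilde{\alpha}^{i-n-1}$ and corrects it by a suitable map so that the square relating the splittings in degrees $i-n-1$ and $i-n$ to the differentials commutes; that such a correction is available and can be taken to preserve the relation $\rho^{i-n-1}\widetilde{\alpha}^{i-n-1} = \mathrm{id}$ is where one uses the minimality of $E$ — both as a complex of injectives and, via Lemma~\ref{lem_homin}, as a complex of projectives — together with the socle description of cocycles and coboundaries in Lemma~\ref{lem_socle}. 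Assembling the $\rho^j$ then yields the asserted chain-level retraction, whence $E_\alpha$ is a direct summand of $E$.

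The hard part will be exactly this construction of compatible splittings: keeping the $\rho^j$ simultaneously one-sided inverses to the $\widetilde{\alpha}^j$ and compatible with the differentials requires the same kind of careful, somewhat tedious bookkeeping acknowledged in the proof of Lemma~\ref{lem_splitter}, though it is purely formal once the minimality statements of Lemmas~\ref{lem_homin} and~\ref{lem_socle} are in hand.
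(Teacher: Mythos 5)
Your proposal matches the paper's approach essentially verbatim: the paper presents Lemma~\ref{lem_splitter2} as the formal dual of Lemma~\ref{lem_splitter}, deferring to the latter's proof, which (like yours) disposes of the chain-map, cohomology, and indecomposability claims by inspection of the construction and flags the compatible, step-by-step construction of the retraction as the tedious part relying on minimality. You fill in the same ingredients (Lemmas~\ref{lem_socle} and~\ref{lem_homin}, split monomorphisms via self-injectivity of $R$) and leave the splitting bookkeeping at the same level of detail as the paper does.
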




\begin{ack}
We are grateful to Adam-Christiaan van Roosmalen and to Jan Stovicek for interesting discussions on this and related subjects; Adam-Christiaan in particular was very generous in sharing his thoughts and has informed us that these ideas can be generalised to treat `larger' type $A$ examples. We also thank Henning Krause for encouraging us to pursue this project.
\end{ack}

\bibliography{greg_bib}

\end{document}